\numberwithin{equation}{section}
\newtheorem{theorem}[equation]{Theorem}
\newtheorem{corollary}[equation]{Corollary}
\newtheorem{lemma}[equation]{Lemma}
\newtheorem{proposition}[equation]{Proposition}
\newtheorem{question}[equation]{Question}
\newtheorem*{theorem*}{Theorem}
\theoremstyle{remark}
\newtheorem{remark}[equation]{\bf Remark}
\theoremstyle{remark}
\newtheorem{example}[equation]{\bf Example}
\theoremstyle{remark}
\newtheorem{definition}[equation]{\bf Definition}
\newcommand{\A}{\mathbb{A}}
\newcommand{\C}{\mathbb{C}}
\newcommand{\tens}[1]{%
  \mathbin{\mathop{\otimes}\displaylimits_{#1}}%
}
\newcommand{\ML}{\mathrm{ML}}
\newcommand{\MLs}{\mathrm{ML^*}}
\newcommand{\Ufd}{\mathrm{UFD}}
\newcommand{\Exp}{\mathrm{EXP}}
\newcommand{\Exps}{\mathrm{EXP^*}}
\newcommand{\Tr}{\mathrm{tr.deg}}
\title{An algebraic characterization of the affine three space in arbitrary characteristic \hspace{3mm}}
\author[Sai Krishna]{P.M.S.Sai Krishna} 
\address{P.M.S.Sai Krishna, Department of Mathematics, Indian Institute of Technology Bombay, Powai,  Mumbai 400076,  India} 
\email{\href{mailto:saikrishna183@gmail.com}{saikrishna183@gmail.com},  \hspace{2mm}\href{mailto:204099001@iitb.ac.in}{204099001@iitb.ac.in}}
\begin{document}

\date{}
\maketitle
{\let\thefootnote\relax\footnotetext{{14R10,  13B25,  13A50,  13N15,  14R20}}

{\let\thefootnote\relax\footnotetext{{Keywords:}~ {polynomial ring,  exponential map,  Makar-Limanov invariant}}}

\begin{abstract}
    We give an algebraic characterization of the affine $3$-space over an algebraically closed field of arbitrary characteristic. We use this characterization to reformulate the following question. Let $$A=k[X, Y, Z, T]/(XY+Z^{p^e}+T+T^{sp})$$ where $p^e\nmid sp$,  $sp\nmid p^e$,  $e, s\geq 1$ and $k$ is an algebraically closed field of positive characteristic $p$. Is $A= k^{[3]}?$ We prove some results on $\ML$ and $\MLs$ invariants and use them to prove a special case of the strong cancellation of $k^{[2]}$.
\end{abstract}
 
\section{Introduction}
\emph{
Throughout the paper, $k$ is a field,  rings are commutative with unity,  $R^{[n]}$ denotes the polynomial ring in $n$ variables over the ring $R$,  $\Ufd$ denotes a Unique Factorization Domain,  $B$ is a $k$-domain,  $\Tr _kB$  is the transcendence degree of the fraction field of $B$ over $k$ and $B^*$ is the group of units of $B$.}

Exponential maps are also known as locally finite iterative higher derivations $(\mathrm{lfihd}).$ Working with locally nilpotent derivations of $k$-domains is not usually possible when $k$ has positive characteristic. Exponential maps generalize locally nilpotent derivations and capture information of the domain's higher-order derivations. We note that when $k$ is of characteristic zero,  exponential maps are equivalent to locally nilpotent derivations. 

The characterization of polynomial rings is an important problem in Affine Algebraic Geometry. For instance,  a direct application of characterization of $k^{[2]}$ as stated in (\ref{miya}) will prove that $k^{[2]}$ is cancellative for any algebraically closed field of arbitrary characteristic. We note that in the case of zero characteristic, the cancellation of $k^{[2]}$ for an arbitrary field can be deduced from the case when $k$ is algebraically closed by Kambayashi's Theorem \cite{K}, which is stated in \cite[Theorem $5.2$]{Gene}. In \cite{Rus}, it was shown that $k^{[2]}$ is cancellative when $k$ is a perfect field of arbitrary characteristic. In the case of positive characteristic,  the cancellation of $k^{[2]}$ for any field was shown in \cite{BN} and \cite{Koj}. We give an alternate proof (\ref{cancel2}) of the cancellation of $k^{[2]}$ for any field $k$ using a characterization (\ref{miya2}) of $k^{[2]}$.

The $\ML$-invariant was introduced by Lenoid Makar-Limanov in \cite{ML} to show that the Koras-Russel threefold $x+x^2y+z^2+t^3$ over $\C$ is not isomorphic to $\C ^3.$ The $\MLs$-invariant was introduced in \cite[page 237]{GF1}.  In \cite[Theorem $1$]{GS}, it was shown that when $k$ is an algebraically closed field of zero characteristic and if $\MLs $ is non-trivial, then the $\MLs $-invariant coincides with the $\ML $-invariant for affine $k$-domains. A natural question is whether the same result holds when $k$ is of arbitrary characteristic. Lemmas (\ref{tr2}) and $(\ref{tr3 equality})$ provide partial answers to this question.

The main result of this paper is the following algebraic characterization of $k^{[3]}$, which is proved in \cite[Theorem $4.6$]{NN} when $k$ is of zero characteristic. We give a characteristic free proof (\ref{affine 3space}).

\begin{theorem}
        Let $k$ be an algebraically closed field and $B$ be an affine $\Ufd $ over $k$ with $\Tr _kB=3$. Then the following are equivalent. 
    \begin{enumerate}
        \item  $B= k^{[3]}$

        \item $\ML ^*(B)=k$

        \item $\ML (B)=k$ and $\MLs (B)\neq B$.
    \end{enumerate}
\end{theorem}

We show a possible application of the above characterization in the following situation. In \cite{Asa1},  Asanuma introduced the family of rings  $$A=k[X, Y, Z, T]/(X^mY+Z^{p^e}+T+T^{sp})$$  where $k$ is of positive characteristic $p$ ,  $p^e\nmid sp, sp\nmid p^e$,   $m, e, s\geq 1$ and showed that $A^{[1]} = k^{[4]}$. In \cite{NG1}, Gupta showed that $A\neq k^{[3]}$ when $m>1$, and thus resolved the Zariski cancellation problem for $k^{[3]}$ in the case of positive characteristic. In \cite{NG2}, the Zariski cancellation problem was completely solved by Gupta in the positive characteristic case. However, it is not known whether $A= k^{[3]}$ when $m=1$. As mentioned in \cite[Remark $2.3$]{Asa2},  if $A= k^{[3]}$, then it will give an example of non-linearizable torus action on $k^{[3]}$ in positive characteristic which will serve as a counter-example to the linearization problem (which is open for $k^{[3]}$ in the case of positive characteristic). If $A\neq k^{[3]}$,  then we will get another counter example to the Zariski cancellation problem for $k^{[3]}$. 

The brief outline of the paper is as follows. In section $2$,  we state a few basic properties of exponential maps and introduce the Makar-Limanov $(\ML )$-invariant and Makar-Limanov-Freduenburg $(\MLs )$-invariant,  which are used in our characterization of $k^{[3]}$. In section $3$, we give an alternative proof of the algebraic characterization \cite{Miya} of $k^{[2]}$ over an algebraically closed field. In the case of zero characteristic,  a proof of this result is presented in \cite[Theorem $9.12$]{Gene}. Using exponential maps,  we give a characteristic free proof (\ref{miya}) of \cite[Theorem $1$]{Miya}. We extend this result to arbitrary fields (\ref{miya2}). Next, in section $3$,  we give a characteristic free proof  (\ref{2space}) of the algebraic characterization of $k^{[2]}$ presented in \cite[Theorem $3.8$]{NN} for any field of zero characteristic. In section $4$, we prove the main result of this paper(\ref{affine 3space}). In section $5$,  we state some properties of $\ML$ and $\MLs $ invariants, some of which are mentioned in \cite{NN} in the case of zero characteristic. The stability results for $\ML$ and $\MLs$ invariants are particularly interesting. It is a well known result that for a $k$-domain $B$ with $\Tr _kB=1$,  we have that $\ML(B^{[n]})=\ML(B)$. We extend this result to domains of transcendence degree $2$ under certain assumptions and using them we prove a special case the strong cancellation of $k^{[2]}$ under some hypothesis. We prove similar results for $\MLs$-invariant and use them to prove some results that partially answer the question (\ref{quest}). 

\section{Preliminaries}
We recall some definitions.
\begin{definition}
    Let $B$ be a $k$-domain. Let $\delta: B\longrightarrow B^{[1]}$ be a $k$-algebra homomorphism. We denote it by $\delta_t:B\longrightarrow B[t]$ if we want to emphasize the indeterminate. We call $\delta$ an \emph{exponential map} on $B$ if
    \begin{enumerate}
        \item $\epsilon_0\delta_t$ is identity on $B$ where $\epsilon_0:B[t]\longrightarrow B$ is the evaluation map at $t=0.$

        \item $\delta_s\circ \delta_t=\delta_{s+t}$,  where $\delta_s$ is extended to a homomorphism $B[t]\longrightarrow B[s, t]$ by defining $\delta_s(t)=t$.
    \end{enumerate}

We denote the set of all exponential maps on $B$ by $\Exp (B)$. The set $B^{\delta}=\{x\in B| \delta(x)=x\}$ is called the \emph{ring of $\delta$-invariants} of $B$.
\end{definition}
\begin{example}\label{exam}
    The inclusion map $B \xhookrightarrow{}B^{[1]}$ is a $\emph{trivial}$ exponential map on $B$. Let $B=k[X_1, \dots,  X_n]$ be the polynomial ring in $n$ variables. Let $\delta_i \in \Exp (B)$ be such that $\delta_i(X_i)=X_i+t$ and $\delta_i(X_j)=X_j$ $\forall \ j\neq i$. $\delta_i$ is called a \emph{shift} exponential map on the polynomial ring. 
\end{example}

\begin{remark}\label{remark1}
Let $\delta$ be an exponential map on a $k$-domain $B$.
\begin{enumerate}
    \item     We can express $\delta$ as follows. $$\delta_t(x)=D_0(x)+D_1(x)t+D_2(x)t^2+\cdots+D_m(x)t^m$$ where $D_0, D_1, D_2, \dots $ are iterative derivatives associated with $\delta$ and $m=\deg_t(\delta(x))$. From property $(1)$ of exponential maps,  we have that $D_0$ is the identity map. Note that $B^{\delta}=\bigcap _{i\geq 1}\mathrm{Ker}(D_i)$.

    \item For any $b\in B$,  we can define the \emph{$\delta$}-degree of a nonzero $b$ as the $t$-degree of $\delta(b)$ i.e.  $\deg_{\delta}(b)=\deg_t(\delta(b))$. We define $\deg_{\delta}(0)=-\infty. $

    \item The \emph{$\delta$}-degree function is a degree function satisfying the following properties. 
    \begin{enumerate}
        \item $\deg_{\delta}(ab)=\deg_{\delta}(a)+\deg_{\delta}(b)$ for all $a, b \in B$
        
        \item $\deg_{\delta}(a+b)\leq \max\{ \deg_{\delta}(a), \deg_{\delta}(b)\}$ and the equality holds when $ \deg_{\delta}(a)\neq \deg_{\delta}(b)$.

    \end{enumerate}

    \item Consider $\epsilon_\lambda: B^{[1]} \longrightarrow B$ which is the evaluation map at $\lambda \in k$. We note that $\epsilon_\lambda \circ \delta$ is a $k$-algebra automorphism of $B$ with inverse $\epsilon_{-\lambda}\circ \delta$.
    
\end{enumerate}
\end{remark}

\begin{definition}
Let $\delta$ be a non-trivial exponential map on a $k$-domain $B$. 
\begin{enumerate}
    \item  Any $x\in B$ such that $\deg_{\delta}(x)=\min\{\deg_{\delta}(a)|a\in B\setminus B^{\delta}\}$ is called a \emph{local slice} of $\delta$. Every non-trivial exponential map has a local slice.

    \item An element $x\in B$  is called a \emph{slice} if $x$ is a local slice of $\delta$ and $D_n(x)=1$  or equivalently $D_n(x)$ is a unit,  where $n=\deg_{\delta}(x)$. It is not necessary that every non-trivial exponential map has a slice. We denote the set of all exponential maps on $B$ which have a slice by $\Exps (B).$

    \item Any non-trivial $\delta\in \Exp (B)$ is said to be a \emph{reducible} exponential map if there exist a proper principal ideal $I$ of $B$ such that $D_i(B)\subseteq I^i$ for all $i\geq 1$. If a non-trivial $\delta \in \Exp (B)$ is not reducible, then it is called an \emph{irreducible} exponential map.

    \item Let $\delta_1, \delta_2 \in \Exp (B)$. They are said to be \emph{equivalent} if $B^{\delta_1}=B^{\delta_2}$.

    \item The \emph{Makar-Limanov invariant} is a $k$-subalgebra of $B$ defined as $\ML (B)=\cap _{\delta \in \Exp (B)}B^{\delta}.$
    
    \item The \emph{Makar-Limanov-Freudenburg invariant} is a $k$-subalgebra of $B$ defined as 
    
    $\MLs (B)=\cap _{\delta \in \Exps (B)}B^{\delta}.$ If $\Exps (B)=\phi$,  then we define $\MLs (B)=B$. 

    \item $B$ is called \emph{rigid} if there does not exists any non-trivial exponential map on $B$. This is equivalent to the condition $\ML (B)=B$.

    \item $B$ is $\emph{semi-rigid}$  if there exists  $\delta \in \Exp (B)$ such that $\ML (B)=B^{\delta}.$ A rigid domain is also semi-rigid. 

    \item $B$ is called \emph{geometrically factorial} if $B\tens{k} L$ is a $\Ufd$, where $L$ is any algebraic extension of $k$.   

    \item A subring $A$ of $B$ is said to be \emph{factorially closed} in $B$ if for any non-zero $x, y \in B$ such that $xy\in A$,  then $x, y \in A$. Note that if $A$ is factorially closed in $B$, then $A$ is algebraically closed in $B$.
 
\end{enumerate}   
\end{definition}

We summarise below some useful properties of exponential maps \cite[Lemma $2.1,  2.2$]{CM}.

\begin{remark}\label{first principles}
Let $\delta$ be a non-trivial exponential map on a $k$-domain $B$ and $x$ be a local slice with $n=\deg_{\delta}(x)$. Let $c=D_n(x)$.
\begin{enumerate}
    \item $D_i(x)\in B^{\delta}$ for all $i>0.$ 
    
    \item  $B^{\delta}$ is factorially closed in $B$. In particular,  $B^{\delta}$ is algebraically closed in $B$. 

    \item $B[c^{-1}]=B^{\delta}[c^{-1}][x]$.
    
    \item  $\Tr _kB^{\delta}=\Tr _kB-1$.

    \item Intersection of factorially closed rings is factorially closed. Hence $\ML (B)$ and $\MLs (B)$ are factorially closed in $B$.
    
    \item $k \subseteq \ML (B)\subseteq \MLs (B)$ and hence if $\MLs (B)=k$,  then $\ML (B)=k$. 

    \item $\ML (k^{[n]})=\MLs (k^{[n]})=k$ for all $n\geq 1$.

    \item  $\delta$ can be extended to an exponential map on $B^{[n]}=B[X_1, \dots, X_n]$ by fixing $X_i$. The ring of invariants of this extended exponential map on $B[X_1, \dots, X_n]$ is $B^{\delta}[X_1, \dots, X_n]$. This, combined with the fact that there always exists shift exponential maps, gives that $\ML (B^{[n]})\subseteq \ML (B)$. By the same argument,  it follows that $\MLs  (B^{[n]})\subseteq \MLs (B)$.

    \item If $B$ is a semi-rigid $k$-domain which is not rigid such that $\Tr _kB<\infty$,  then for any non-trivial $\epsilon \in \Exp (B)$,  we have that $B^{\epsilon}=\ML (B)$. This follows since $\ML (B)=B^{\delta}$ for some $\delta \in \Exp (B)$ and  $\ML (B)=B^{\delta}\subseteq B^{\epsilon}$. $\ML (B)$ and $B^{\delta}$ are algebraically closed in $B$ and have the same transcendence degree and hence $\ML (B)=B^{\epsilon}$. Thus,  all non-trivial exponential maps of $B$ have the same ring of invariants. 

    \item A factorially closed subring of a $\Ufd$ is a $\Ufd$.
\end{enumerate}
\end{remark} 

\noindent
The third property in the above remark is quite useful. We record an important special case of it as the  Slice Theorem. In the case of zero characteristic this coincides with \cite[Corollary $1.26$]{Gene}.

\begin{corollary}[\emph{Slice Theorem}]\label{slice theorem}
Let $\delta$ be a non-trivial exponential map on a $k$-domain $B$ and $x$ be a slice with $n=\deg_{\delta}(x)$. Then  $B=B^{\delta}[x]$ and since $x$ is transcendental over $B^{\delta}$,  it follows that $B= (B^{\delta}) ^{[1]}.$
\end{corollary}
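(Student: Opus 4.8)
The plan is to deduce the statement almost directly from the third and fourth items recorded in Remark~\ref{first principles}, so the work is really in unwinding the definition of a slice and then a short transcendence-degree argument. First I would use the hypothesis that $x$ is a slice: by definition this means $x$ is a local slice with $c := D_n(x)$ a unit of $B$. By the first item of the remark, $c = D_n(x) \in B^{\delta}$ because $n > 0$. Since $B^{\delta}$ is factorially closed in $B$ (second item), the relation $c \cdot c^{-1} = 1 \in B^{\delta}$ forces $c^{-1} \in B^{\delta}$; hence $c$ is in fact a unit of $B^{\delta}$. Consequently, localizing at $c$ alters neither ring: $B[c^{-1}] = B$ and $B^{\delta}[c^{-1}] = B^{\delta}$.

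Next I would invoke the third item, namely $B[c^{-1}] = B^{\delta}[c^{-1}][x]$. Substituting the two identities just obtained collapses this chain of equalities to $B = B^{\delta}[x]$, which is the first assertion of the corollary.

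It then remains to verify that $x$ is transcendental over $B^{\delta}$, so that $B^{\delta}[x]$ really is a polynomial ring in one variable. For this I would appeal to the fourth item, $\Tr_k B^{\delta} = \Tr_k B - 1$. Were $x$ algebraic over $B^{\delta}$, then the fraction field of $B = B^{\delta}[x]$ would be algebraic over that of $B^{\delta}$, giving $\Tr_k B = \Tr_k B^{\delta}$ and contradicting the strict drop by one. Hence $x$ is transcendental over $B^{\delta}$, and therefore $B = B^{\delta}[x] \cong (B^{\delta})^{[1]}$.

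I do not expect a genuine obstacle here, since the substance is already packaged into the earlier properties of exponential maps. The only point that requires a moment's care is the passage from ``$c$ is a unit of $B$'' to ``$c$ is a unit of $B^{\delta}$,'' which is exactly where factorial closedness of $B^{\delta}$ enters and makes the two localizations trivial.
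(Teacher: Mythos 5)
Your proof is correct and follows exactly the route the paper intends: the corollary is recorded as a special case of item (3) of Remark~\ref{first principles}, where the slice condition makes $c=D_n(x)$ a unit so that both localizations are trivial, and the transcendence of $x$ over $B^{\delta}$ follows from the drop in transcendence degree (or, equivalently, from $x\notin B^{\delta}$ together with $B^{\delta}$ being algebraically closed in $B$). Your added care about why $c^{-1}\in B^{\delta}$ via factorial closedness is a nice touch, but the argument is the same as the paper's.
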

    
We recall the following result from \cite[Lemma $2.3$]{CML}.
\begin{lemma}\label{tr1}
Let $B$ be a $k$-domain with $\Tr _kB=1$ and $k$ is algebraically closed in $B$.  Then $\ML (B)=k$ if and only if $B= k^{[1]}$. Otherwise,  $\ML (B)=B$.    
\end{lemma}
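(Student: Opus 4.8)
The plan is to split the argument according to whether $B$ admits a non-trivial exponential map, i.e.\ whether $B$ is rigid. If $B$ is rigid, then by definition $\ML(B)=B$; and since $\Tr_k B=1$ forces $B\neq k$, we cannot have $\ML(B)=k$ in this case, so it lands squarely in the ``otherwise'' conclusion. The real content is therefore the non-rigid case, where I would show that the existence of a single non-trivial exponential map already forces $B\cong k^{[1]}$.

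So suppose $\delta\in\Exp(B)$ is non-trivial. The first step is to pin down the ring of invariants $B^{\delta}$. By Remark \ref{first principles}(4) we have $\Tr_k B^{\delta}=\Tr_k B-1=0$, so every element of $B^{\delta}$ is algebraic over $k$. Since $k\subseteq B^{\delta}\subseteq B$ and $k$ is algebraically closed in $B$ by hypothesis, this forces $B^{\delta}=k$. In particular $\ML(B)\subseteq B^{\delta}=k$, and combined with the general inclusion $k\subseteq\ML(B)$ this gives $\ML(B)=k$.

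The key step is then to upgrade a local slice to a genuine slice. Take a local slice $x$ of $\delta$ with $n=\deg_{\delta}(x)$ and leading coefficient $c=D_n(x)$. By Remark \ref{first principles}(1), $c\in B^{\delta}=k$, while $c\neq 0$ by the definition of the $\delta$-degree; hence $c$ is a unit of $B$. Therefore $x$ is in fact a slice, and the Slice Theorem (Corollary \ref{slice theorem}) yields $B=B^{\delta}[x]=k[x]\cong k^{[1]}$.

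Putting the two cases together proves the statement. For the biconditional: if $\ML(B)=k$, then $B$ is non-rigid (otherwise $\ML(B)=B\neq k$), so the above analysis gives $B\cong k^{[1]}$; conversely, if $B\cong k^{[1]}$, then $\ML(B)=k$ by Remark \ref{first principles}(7) (equivalently, by running the non-rigid analysis on the shift map). In the remaining ``otherwise'' situation $B$ is rigid, whence $\ML(B)=B$. I expect the only delicate point to be the observation that the leading coefficient $c$ necessarily lands in $k$ and is therefore invertible; once $B^{\delta}=k$ is established, the rest is a direct application of the Slice Theorem, so I anticipate no serious obstacle beyond that.
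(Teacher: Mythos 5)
Your proof is correct. Note that the paper itself gives no proof of this lemma --- it is recalled from \cite[Lemma $2.3$]{CML} --- but your argument (in the non-rigid case, $B^{\delta}$ has transcendence degree $0$ and hence equals $k$ by the algebraic closedness hypothesis, so the leading coefficient of a local slice lies in $k^*$, the local slice is a genuine slice, and the Slice Theorem gives $B\cong k^{[1]}$) is exactly the standard one and uses only the facts already recorded in Remark \ref{first principles} and Corollary \ref{slice theorem}.
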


\begin{theorem}[\emph{Semi-Rigidity Theorem}]\label{semi}
Let $B$ be a domain which is either an affine $k$-domain or finitely generated as a ring. Then $B$ is rigid if and only if $B^{[1]}$ is semi-rigid.  Equivalently,  $B \textit{ is rigid}$ if and only if $\ML (B^{[1]})=B. $       
\end{theorem}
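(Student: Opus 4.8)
The plan is to prove the three stated conditions equivalent by establishing the cycle $(c)\Rightarrow(b)\Rightarrow(a)\Rightarrow(c)$, writing $B^{[1]}=B[X]$ and letting $\delta$ be the shift exponential map, so that $(B[X])^{\delta}=B$. The implication $(c)\Rightarrow(b)$ is immediate: if $\ML(B[X])=B$, then $\delta$ is an exponential map whose ring of invariants equals $\ML(B[X])$, which is exactly the definition of semi-rigidity. For $(b)\Rightarrow(a)$, first note that $B[X]$ is never rigid, since $\delta$ is non-trivial, and that $\Tr_k B[X]<\infty$ in both cases of the hypothesis (in the ``finitely generated as a ring'' case one checks that the ground field must be finite and $B$ is in fact an affine domain over it). Suppose $B$ were not rigid and pick a non-trivial $\sigma\in\Exp(B)$; extending it to $B[X]$ by fixing $X$ (Remark \ref{first principles}(8)) gives $\tilde\sigma\in\Exp(B[X])$ with $(B[X])^{\tilde\sigma}=B^{\sigma}[X]$. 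Since $\Tr_k B^{\sigma}=\Tr_k B-1$ (Remark \ref{first principles}(4)), we have $B^{\sigma}\subsetneq B$, so $X\in B^{\sigma}[X]\setminus B$ shows that $(B[X])^{\tilde\sigma}$ and $(B[X])^{\delta}=B$ are distinct rings of invariants. This contradicts Remark \ref{first principles}(9), which forces all non-trivial exponential maps of the (non-rigid, semi-rigid, finite transcendence degree) domain $B[X]$ to share the ring of invariants $\ML(B[X])$. Hence $B$ is rigid.

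The substance of the theorem is the remaining implication $(a)\Rightarrow(c)$. By Remark \ref{first principles}(8) we already have $\ML(B[X])\subseteq\ML(B)=B$, so it suffices to prove $B\subseteq(B[X])^{\epsilon}$ for every non-trivial $\epsilon\in\Exp(B[X])$. The idea is to manufacture, from $\epsilon$, exponential maps on $B$ itself and then invoke rigidity. Concretely, in the case where $X\in(B[X])^{\epsilon}$, so that $\epsilon_t(X)=X$, I would fix $\lambda\in k$ and set $\sigma^{\lambda}_t(b):=\epsilon_t(b)\big|_{X=\lambda}$ for $b\in B$. A direct check using $\epsilon_s(X)=X$ shows that the evaluation $X\mapsto\lambda$ commutes with the co-multiplication, so each $\sigma^{\lambda}$ is an exponential map on $B$; since $B$ is rigid, every $\sigma^{\lambda}$ is trivial, i.e. $\epsilon_t(b)\big|_{X=\lambda}=b$ for all $\lambda\in k$ and all $b\in B$. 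When $k$ is infinite this says the polynomial $\epsilon_t(b)-b\in B[t][X]$ has infinitely many roots in $X$, whence $\epsilon_t(b)=b$ and $b\in(B[X])^{\epsilon}$, as required.

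Two points remain, and they constitute the main obstacle. First, a general $\epsilon$ need not fix $X$, and then the naive evaluation $X\mapsto\lambda$ no longer commutes with the co-multiplication (the constant term in $X$ of $\epsilon_t(X)$ obstructs co-associativity). Second, the finitely-generated-as-a-ring case forces the ground field to be finite, so the ``infinitely many roots'' step is unavailable. The hard part will be to overcome both uniformly, and the plan is to do so by passing to the associated exponential map of $\epsilon$ with respect to the degree filtration $\deg_X$ on $B[X]$ (the Makar-Limanov--Crachiola associated-graded construction, cf. \cite{CM,CML}). This yields a $\deg_X$-homogeneous $\overline{\epsilon}\in\Exp(B[X])$ that is non-trivial precisely when $\epsilon$ is and satisfies $\mathrm{gr}_{\deg_X}\big((B[X])^{\epsilon}\big)\subseteq(B[X])^{\overline{\epsilon}}$; homogeneity constrains the $X$-dependence of $\overline{\epsilon}$ tightly enough that a genuine exponential map on $B$ can be extracted by a substitution of the type above, now valid over any field and independent of the cardinality of $k$. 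Establishing that this induced map on $B$ is well defined and is non-trivial whenever $\epsilon$ moves an element of $B$ is the technical heart of the argument, and the $X$-fixing, infinite-field case computed above is the template it must reproduce in general; non-triviality of the induced map then contradicts the rigidity of $B$ and completes $(a)\Rightarrow(c)$.
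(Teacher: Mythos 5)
Your treatment of the easy implications is correct. The cycle $(c)\Rightarrow(b)\Rightarrow(a)$ works: the shift map has ring of invariants exactly $B$ (it contains $B$, is algebraically closed in $B[X]$, and has the same transcendence degree), your use of Remark \ref{first principles}(9) to rule out a non-trivial $\sigma\in\Exp(B)$ is sound, and the observation that the ``finitely generated as a ring'' case forces $k$ finite and $B$ affine does guarantee $\Tr_kB[X]<\infty$ throughout. All of this is a mild elaboration of the paper's one-line argument deducing rigidity of $B$ from $\ML(B^{[1]})=B$ via Remark \ref{first principles}(8).

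The genuine gap is in $(a)\Rightarrow(c)$, and you have named it yourself without closing it. What you actually prove is the special case where $\epsilon$ fixes $X$ and $k$ is infinite; that case is far from generic (the shift map itself does not fix $X$), and the finite-field situation is not a side issue here, since the ``finitely generated as a ring'' hypothesis exists precisely to cover it. The homogenization plan you outline is the right circle of ideas --- it is essentially how Crachiola and Makar-Limanov argue --- but the two steps you defer, namely that the map extracted from the $\deg_X$-homogeneous $\overline{\epsilon}$ is a well-defined exponential map on $B$ and that it is non-trivial whenever $\epsilon$ moves an element of $B$, are the entire content of this direction, and nothing in your write-up establishes them. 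Note that the paper does not prove this direction either: it cites \cite[Theorem $3.1$]{CM} and only verifies the converse in-house. So either carry out the associated-graded argument in full or invoke that reference as the paper does; as it stands your proposal proves only the easy half of the theorem.
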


\begin{proof} Clearly if $\ML (B^{[1]})=B$,  then by $(8)$ of (\ref{first principles}),  it follows that $B$ must be rigid. We refer to \cite[Theorem $3.1$]{CM} for the other direction.
\end{proof}

We recall the following two results \cite[Theorem $3.1$]{Crac} and \cite[Corollary $3.3$]{SK}.
\begin{theorem}\label{ufd slice}
    Let $B$ be a $\Ufd $  over an algebraically closed field $k$ with $\Tr _kB=2$ and a non-trivial $\delta \in \Exp (B)$. Then $B= (B^{\delta})^{[1]}.$ 
\end{theorem}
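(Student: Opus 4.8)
The plan is to reduce everything to finding a single element that is a variable over $A := B^{\delta}$, after which the conclusion is immediate. So I would first record the structure of $A$. By parts (2), (4) and (10) of (\ref{first principles}), $A$ is factorially closed in $B$, is itself a $\Ufd$, and has $\Tr_k A = \Tr_k B - 1 = 1$; since $k$ is algebraically closed and $A$ is a $k$-domain, $k$ is algebraically closed in $A$, and factorial closedness makes $A$ algebraically closed in $B$. I would then check that $A$ is in fact a principal ideal domain, using that it is a one-dimensional $\Ufd$ (the one-dimensionality coming from $\Tr_k A = 1$ in the ambient finiteness we are working in). The goal becomes: produce $x_0 \in B$ with $B = A[x_0]$; because $x_0$ lies outside the $\delta$-degree-zero subring $A$ and $A$ is algebraically closed in $B$, such $x_0$ is automatically transcendental over $A$, whence $B \cong A^{[1]} = (B^{\delta})^{[1]}$.

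Next I would fix a local slice $x$ of $\delta$ with $n = \deg_{\delta}(x)$ and $c = D_n(x) \in A$, and invoke part (3) of (\ref{first principles}) for the key localized identity $B[c^{-1}] = A[c^{-1}][x]$: after inverting $c$, the element $x$ is already a variable. The obstruction to descending this to $B$ is precisely the denominator $c$, so I would first select the best possible local slice. Consider the leading-coefficient set $\mathfrak{p} = \{ D_n(x') : x' \text{ a local slice of } \delta \} \cup \{0\}$. For $a \in A$ and local slices $x', x''$ one has $\delta_t(a x' + x'') = a\,\delta_t(x') + \delta_t(x'')$, since $a$ is $\delta$-invariant; comparing $t^n$-coefficients and using that $n$ is the minimal positive $\delta$-degree shows $\mathfrak{p}$ is an ideal of $A$. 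As $A$ is a PID, $\mathfrak{p} = (c_0)$ with $c_0 = D_n(x_0)$ for some local slice $x_0$, and every other leading coefficient is a multiple of $c_0$. I emphasize that $\delta$ itself need not possess a genuine slice (its leading-coefficient ideal may be proper, as already for $x\partial_y$ on $k^{[2]}$), so the argument must produce a coordinate $x_0$ rather than a slice.

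Finally, with $x_0$ chosen so that $c_0$ generates $\mathfrak{p}$, I would prove $B = A[x_0]$ by clearing denominators in $B[c_0^{-1}] = A[c_0^{-1}][x_0]$. Writing an arbitrary $b \in B$ as $b = \sum_i a_i x_0^{\,i}$ with $a_i \in A[c_0^{-1}]$, I would take a hypothetical $b$ whose expression carries a minimal positive power of $c_0$ in its denominators and apply the higher derivations $D_j$ of $\delta$ from (\ref{remark1}): comparing $\delta_t(b)$ with $\delta_t\!\left(\sum_i a_i x_0^{\,i}\right)$ and invoking the minimality of $c_0$ among all leading coefficients forces the denominators to cancel, so in fact every $a_i \in A$ and $b \in A[x_0]$. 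This denominator-clearing is the heart of the matter and the main obstacle: in arbitrary characteristic one cannot differentiate and divide by the resulting integer exponents, so the classical characteristic-zero computation is unavailable and must be replaced throughout by the iterative higher-derivation identities and the factorial closedness of $A$ in $B$. Once $B = A[x_0]$ is established with $x_0$ transcendental over $A$, we conclude $B \cong (B^{\delta})^{[1]}$.
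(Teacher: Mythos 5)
First, a point of reference: the paper does not prove this statement at all --- Theorem \ref{ufd slice} is recalled verbatim from \cite[Theorem $3.1$]{Crac}, so there is no internal proof to compare against, and your attempt has to be judged against the known argument. Your skeleton matches that argument: $A=B^{\delta}$ is factorially closed, hence a $\Ufd$, algebraically closed in $B$, of transcendence degree $1$; the leading coefficients of local slices form an ideal $\mathfrak{p}=(c_0)$ of $A$; and the goal is $B=A[x_0]$ for a local slice $x_0$ realizing the generator. But the step you yourself flag as ``the heart of the matter'' is absent rather than compressed. ``Comparing $\delta_t(b)$ with $\delta_t\bigl(\sum_i a_i x_0^{\,i}\bigr)$ and invoking the minimality of $c_0$'' does not cancel denominators: minimality of $c_0$ only constrains elements of $\delta$-degree exactly $n$, whereas the coefficients $a_i\in A[c_0^{-1}]$ of a general $b$ of large $\delta$-degree carry high powers of $c_0$, and you exhibit no identity among the $D_j$ that converts a surviving denominator into a local slice with smaller leading coefficient. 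The actual mechanism, in Crachiola's proof and in its characteristic-zero ancestor, is a descent modulo a prime factor $p$ of $c_0$: since $A$ is factorially closed in the $\Ufd$ $B$, $p$ remains prime in $B$, so $B/pB$ is a domain carrying the induced exponential map; since $k$ is algebraically closed and $\Tr _kA=1$, one gets $A/pA=k$; one then shows the image of $x_0$ in $B/pB$ lies in $k$, so $x_0-\lambda=ps$ for some $\lambda\in k$, $s\in B$, and $s$ is a local slice with $D_n(s)=c_0/p$, contradicting that $c_0$ generates $\mathfrak{p}$ (the case where the induced map on $B/pB$ is trivial needs separate treatment). This is precisely where the hypotheses ``$B$ is a $\Ufd$'' and ``$k$ is algebraically closed'' do their work. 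Your draft uses algebraic closedness of $k$ only to say $k$ is algebraically closed in $A$, which cannot be enough: over non-closed fields the conclusion requires the strictly stronger hypothesis of geometric factoriality (compare Corollary \ref{gufd slice}), so any proof that never invokes $k=\overline{k}$ beyond that point is missing an essential ingredient.

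Two smaller issues. The claim that $A$ is a PID needs justification: $\Tr _kA=1$ gives Krull dimension $1$ only after you know $A$ is affine (via Zariski's finiteness theorem, or Proposition \ref{ufd tr1} in the affine setting); it is not automatic from ``one-dimensional $\Ufd$'' as written, especially since the theorem does not assume $B$ affine. Also, your ideal $\mathfrak{p}$ is fine as far as it goes (the verification that it is closed under $A$-linear combinations is correct, using minimality of $n$), but you should note explicitly that the generator $c_0$ is itself attained by a local slice because $\mathfrak{p}$ is literally the set of leading coefficients together with $0$, not merely the ideal they generate.
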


\begin{corollary}\label{gufd slice}
      Let $B$ be a $\Ufd$ over a field $k$ with $\Tr _kB=2$ and a non-trivial $\delta \in \Exp (B)$. Suppose $B$ is geometrically factorial. Then $B= (B^{\delta})^{[1]}.$
\end{corollary}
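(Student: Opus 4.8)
The plan is to reduce to Theorem~\ref{ufd slice} by extending scalars to the algebraic closure $\o{k}$ of $k$, and then to descend the resulting isomorphism back to $k$. Set $R=B^{\delta}$ and $\o{B}=B\tens{k}\o{k}$. Since $B$ is geometrically factorial, $\o{B}$ is a $\Ufd$, and extension by an algebraic field extension does not change transcendence degree, so $\Tr_{\o{k}}\o{B}=2$. The exponential map $\delta_t\colon B\to B[t]$ extends by scalars to $\o{\delta}_t=\delta_t\tens{k}\o{k}\colon\o{B}\to\o{B}[t]$, which is again an exponential map; it is non-trivial because $\o{\delta}_t(b\otimes 1)=\delta_t(b)\otimes 1$ restricts to $\delta$ on $B\subseteq\o{B}$. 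Applying Theorem~\ref{ufd slice} to $(\o{B},\o{\delta})$ gives $\o{B}\cong(\o{B}^{\o{\delta}})^{[1]}$ as $\o{B}^{\o{\delta}}$-algebras; concretely, $\o{B}=\o{B}^{\o{\delta}}[\o{u}]$ for some $\o{u}$ transcendental over $\o{B}^{\o{\delta}}$.

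First I would identify the invariant ring after base change, namely $\o{R}:=\o{B}^{\o{\delta}}=R\tens{k}\o{k}$. Writing $\delta_t(b)=\sum_i D_i(b)t^i$, we have $R=\bigcap_{i\geq 1}\ker D_i$ and $\o{D}_i:=D_i\tens{k}\o{k}$ computes $\o{\delta}$. The inclusion $R\tens{k}\o{k}\subseteq\o{R}$ is clear. For the reverse, fix a $k$-basis $\{e_j\}$ of $\o{k}$ and write any $\o{b}\in\o{R}$ as a finite sum $\o{b}=\sum_j b_j\otimes e_j$ with $b_j\in B$; then $\o{D}_i(\o{b})=\sum_j D_i(b_j)\otimes e_j=0$ for all $i\geq 1$ forces $D_i(b_j)=0$ for all $i,j$, so each $b_j\in R$ and $\o{b}\in R\tens{k}\o{k}$. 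Hence the isomorphism above reads $\o{B}=\o{R}[\o{u}]$ with $\o{R}=R\tens{k}\o{k}$.

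It remains to descend this to $B\cong R^{[1]}$, which is the heart of the matter. I would reduce it to producing a single $x\in B$ with $\o{R}[x]=\o{B}$: given such an $x$, the inclusion $R[x]\hookrightarrow B$ becomes an equality after the faithfully flat base change $\tens{k}\o{k}$ (both sides become $\o{B}$), hence $R[x]=B$; and since $x$ is transcendental over $\o{R}\supseteq R$, we get $B=R[x]\cong R^{[1]}$. Producing $x$ amounts to finding a $k$-rational coordinate for the form of the affine line $\o{B}=\o{R}[\o{u}]$ over $\o{R}$. Any $\o{R}$-coordinate has the shape $a\o{u}+b$ with $a\in\o{R}^{*}$ and $b\in\o{R}$, so $\operatorname{Aut}_{\o{R}}(\o{B})=\o{R}^{*}\ltimes\o{R}$ and the possible coordinates form a torsor under this affine group; the obstruction to finding a $k$-rational one lives in the corresponding first cohomology.

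The main obstacle is precisely this descent, and it is where both hypotheses are used. Since $R=B^{\delta}$ is factorially closed in the $\Ufd$ $B$, it is itself a $\Ufd$ by Remark~\ref{first principles}$(10)$, so $\operatorname{Pic}(R)=0$; together with the additive Hilbert~$90$ this kills the obstruction from $\o{R}^{*}\ltimes\o{R}$ over the separable part of $\o{k}/k$, allowing $\o{u}$ to be taken over the separable closure $k_{s}$. The delicate part is the purely inseparable extension $\o{k}/k_{s}$, where nontrivial forms of $\mathbb{A}^{1}$ can a priori appear; here geometric factoriality of $B$ — that $B\tens{k}L$ is a $\Ufd$ for \emph{every} algebraic extension $L/k$ — is exactly what forces such inseparable forms to be trivial, so that the coordinate descends all the way to $B$. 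Carrying out this descent along the tower $k\subseteq k_{s}\subseteq\o{k}$, using faithful flatness at each stage, yields the required $x\in B$ and hence $B\cong(B^{\delta})^{[1]}$.
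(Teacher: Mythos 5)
First, a point of comparison: the paper does not actually prove this corollary --- it is recalled verbatim from \cite[Corollary $3.3$]{SK} --- so there is no internal argument to measure yours against; your attempt has to stand on its own.

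Your reduction is set up correctly. Geometric factoriality makes $\o{B}=B\tens{k}\o{k}$ a $\Ufd$ of transcendence degree $2$ over $\o{k}$, the extended map $\o{\delta}$ is a non-trivial exponential map, your basis argument identifying $\o{B}^{\o{\delta}}=B^{\delta}\tens{k}\o{k}$ is correct, and so is the faithful-flatness observation that a single $x\in B$ with $\o{R}[x]=\o{B}$ forces $R[x]=B$. Theorem (\ref{ufd slice}) then reduces the corollary to producing a $k$-rational coordinate for the $R$-form $B$ of $\mathbb{A}^1_R$.

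The gap is that this last step --- which is the entire content of the statement --- is asserted rather than proved. Even the separable half is only gestured at: the exact sequence of pointed sets for $1\to\o{R}\to\o{R}^{*}\ltimes\o{R}\to\o{R}^{*}\to 1$ requires vanishing of $H^1$ of the coefficient groups \emph{twisted} by the cocycle, an identification $H^1(\mathrm{Gal},(R\tens{k}k_s)^{*})\hookrightarrow\mathrm{Pic}(R)$, and a limit over finite Galois subextensions; none of this is carried out, though it is at least a standard kind of argument. The purely inseparable half is the real problem: purely inseparable descent is not governed by any Galois cohomology group, and non-trivial purely inseparable forms of the affine line genuinely exist over imperfect fields (Russell's forms, e.g.\ $y^{p}=x+tx^{p}$ over $\mathbb{F}_p(t)$), so triviality cannot come for free. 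Your sentence that geometric factoriality of $B$ ``is exactly what forces such inseparable forms to be trivial'' is precisely the claim that needs a proof, and no mechanism is offered: geometric factoriality is a hypothesis on $B\tens{k}L$ being a $\Ufd$, and some actual argument is needed to convert that into triviality of the form over the tower $k_s\subseteq\o{k}$ (or, alternatively, into the existence of a slice for some exponential map equivalent to $\delta$, which is how one would more likely attack this directly via a local slice $x$ with $c=D_n(x)$ and the identity $B[c^{-1}]=B^{\delta}[c^{-1}][x]$). As written, the proposal is an accurate reduction followed by an unproven assertion, not a proof.
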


\begin{remark}
Let $B$ and $k$ be as in (\ref{ufd slice})
   \begin{enumerate}
       \item In the case of characteristic zero,  (\ref{ufd slice}) coincides with \cite[Lemma $2.10$]{Gene}. However, the above theorem does not state that every irreducible exponential map on $B$ has a slice. It states that every non-trivial exponential map on $B$ is equivalent to another exponential map which has a slice.

       \item  The following is an example of an irreducible exponential map that doesn't have a slice. Consider $\delta \in \Exp (k[x, y])$ defined by $\delta(x)=x$ and $\delta(y)=y+t+x^2t^p$,  where $k$ is of characteristic $p$.  Notice that for any $f=\sum_{i=0}^n a_i(x)y^i$  where $a_i(x)\in k[x]$,  we have that $\delta(f)=\sum_{i=0}^na_i(x) (y+t+x^2t^p)^i$. The leading coefficient of $\delta(f)$ is $a_n(x)x^{2n}$, and it cannot be a unit; thus $\delta$ has no slice. Moreover,  $\delta$ is an irreducible exponential map. 
       
   \end{enumerate}

\end{remark}

\noindent
The following proposition \cite[Lemma $2.9$,  Remark $2.14$]{Gene} is a characterization of an affine $\Ufd$ of transcendence degree $1$. 
\begin{proposition}\label{ufd tr1}
        Let $B$ be an affine $\Ufd $ over an algebraically closed field $k$ with $\Tr _kB=1$. Then $B= k[t]_{f}$ for some $f\in k[t]$. Moreover,  if $B^*=k^*$,  then $B = k^{[1]}$.
\end{proposition}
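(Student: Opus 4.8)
The plan is to interpret $B$ geometrically as the coordinate ring of the smooth affine curve $C=\operatorname{Spec}B$ and to pin down its structure through a divisor class group computation. First I would observe that since $B$ is an affine $k$-domain with $\Tr_kB=1$, its Krull dimension is $1$; being a $\Ufd$ it is normal (integrally closed), so $C$ is a regular, hence smooth (as $k$ is algebraically closed), affine curve over $k$. Indeed, a Noetherian $\Ufd$ of dimension $\le 1$ is a PID, so $B$ is Dedekind. Let $\o{C}$ be the smooth projective model of $C$, of genus $g$, so that $C=\o{C}\setminus S$ for a finite, nonempty set $S$ of closed points (nonempty precisely because $C$ is affine).

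The key step is to force $g=0$. The localization sequence for divisor class groups yields a surjection
$$\bigoplus_{s\in S}\Z\cdot[s]\longrightarrow \mathrm{Cl}(\o{C})\longrightarrow \mathrm{Cl}(C)\longrightarrow 0,$$
and since $B$ is a $\Ufd$ we have $\mathrm{Cl}(C)=0$, so $\mathrm{Cl}(\o{C})$ is generated by the finitely many classes $[s]$, hence finitely generated. Its subgroup $\mathrm{Pic}^0(\o{C})$, the kernel of the degree map, is then finitely generated as well. On the other hand $\mathrm{Pic}^0(\o{C})$ is the group of $k$-points of the Jacobian, an abelian variety of dimension $g$; over the algebraically closed field $k$ this group is divisible, because multiplication by any $n$ is a surjective isogeny on $k$-points (valid in every characteristic, including the case $p\mid n$). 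A finitely generated divisible abelian group is trivial, so $\mathrm{Pic}^0(\o{C})=0$ and therefore $g=0$, giving $\o{C}\cong\mb{P}^1$.

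With $\o{C}\cong\mb{P}^1$ in hand, I would move one point of $S$ to $\infty$ by an automorphism of $\mb{P}^1$, so that $C\subseteq\A^1=\mb{P}^1\setminus\{\infty\}$ and $C=\A^1\setminus\{a_1,\dots,a_r\}$ for distinct $a_i\in k$. Then $B=\mathcal O(C)=k[t]_f$ with $f=\prod_{i=1}^r(t-a_i)$, which establishes the first assertion. For the final claim I would compute the units: as $k[t]$ is a $\Ufd$ with $k[t]^*=k^*$, the units of $k[t]_f$ are exactly $k^*$ times arbitrary integer powers of the irreducible factors $t-a_i$ of $f$. Hence $B^*=k^*$ forces $r=0$, so $f$ is a nonzero constant, $S=\{\infty\}$, and $C=\A^1$, giving $B\cong k[t]=k^{[1]}$.

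The main obstacle is the genus-$0$ reduction in the second step: everything hinges on the divisibility of $\mathrm{Pic}^0(\o{C})$, and since this paper works in arbitrary characteristic I would take care to invoke this divisibility through the abelian-variety isogeny statement, which holds uniformly in all characteristics, rather than through any characteristic-$0$ shortcut.
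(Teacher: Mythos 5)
Your proof is correct. The paper does not actually prove this proposition --- it is recalled without proof from the cited reference [Gene, Lemma 2.9, Remark 2.14] --- but your argument is the standard one for this classical fact and all the steps check out: a one-dimensional Noetherian $\Ufd$ is a PID, hence $\operatorname{Spec}B$ is a smooth affine curve; triviality of $\mathrm{Cl}$ forces $\mathrm{Pic}^0$ of the projective model to be finitely generated, and divisibility of the points of the Jacobian over an algebraically closed field (valid in every characteristic via the isogeny $[n]$) then kills the genus; the unit computation in $k[t]_f$ finishes the second assertion.
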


\section{Characterization of $k^{[2]}$}

The following algebraic characterization of $k^{[2]}$ is due to Miyanishi \cite[Theorem $1$]{Miya}. We give a proof using exponential maps. 

\begin{theorem}\label{miya}
    Let $k$ be an algebraically closed field and $B$ be an affine $\Ufd$ over $k$ with $\Tr _kB=2$. Then the following are equivalent. 
 \begin{enumerate}
     \item $B= k^{[2]}$.

     \item $\ML (B)=k$
     
     \item $B$ is not rigid and $B^*=k^*$.
   
 \end{enumerate}
\end{theorem}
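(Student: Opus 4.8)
The plan is to prove the three statements equivalent by the cycle $(1)\Rightarrow(2)\Rightarrow(3)\Rightarrow(1)$, using the slice-type structure theorem (\ref{ufd slice}) as the main engine so as to reduce the transcendence degree $2$ situation to the transcendence degree $1$ characterization (\ref{ufd tr1}). Two of the three implications are light, and the weight of the argument sits in $(3)\Rightarrow(1)$.

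The implication $(1)\Rightarrow(2)$ is immediate: if $B\cong k^{[2]}$ then $\ML(B)=\ML(k^{[2]})=k$ by (\ref{first principles})$(7)$. For $(2)\Rightarrow(3)$, suppose $\ML(B)=k$. Since $\Tr_kB=2$ we have $B\neq k=\ML(B)$, so $B$ is not rigid. To see $B^*=k^*$ I would exploit that every ring of invariants $B^{\delta}$ is factorially closed in $B$ by (\ref{first principles})$(2)$: if $u\in B^*$ with inverse $v$, then $uv=1\in k\subseteq B^{\delta}$, so factorial closedness forces $u\in B^{\delta}$ for \emph{every} $\delta\in\Exp(B)$; hence $u\in\bigcap_{\delta}B^{\delta}=\ML(B)=k$. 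Thus $B^*\subseteq k^*$, and the reverse inclusion is clear.

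The substantial direction is $(3)\Rightarrow(1)$. Since $B$ is not rigid, choose a non-trivial $\delta\in\Exp(B)$. As $B$ is a $\Ufd$ over an algebraically closed field with $\Tr_kB=2$, Theorem (\ref{ufd slice}) gives $B\cong(B^{\delta})^{[1]}$. It then remains to identify $B^{\delta}$ with $k^{[1]}$, and for this I would check the hypotheses of (\ref{ufd tr1}): (i) $B^{\delta}$ is a $\Ufd$, being factorially closed in the $\Ufd$ $B$, by (\ref{first principles})$(2)$ and $(10)$; (ii) $\Tr_kB^{\delta}=\Tr_kB-1=1$, by (\ref{first principles})$(4)$; (iii) $B^{\delta}$ is affine, which I would deduce from $B=B^{\delta}[x]$ by collecting the finitely many $B^{\delta}$-coefficients of a generating set of $B$ into a finitely generated subalgebra $A_0\subseteq B^{\delta}$, so that $B=A_0[x]=B^{\delta}[x]$, and then using transcendence of $x$ over $B^{\delta}$ to conclude $A_0=B^{\delta}$; and (iv) $(B^{\delta})^*=k^*$, since a unit of $B^{\delta}$ is a unit of $B$, whence $(B^{\delta})^*\subseteq B^*\cap B^{\delta}=k^*$. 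Proposition (\ref{ufd tr1}) then yields $B^{\delta}\cong k^{[1]}$, so $B\cong(B^{\delta})^{[1]}\cong k^{[2]}$, closing the cycle.

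The genuine content is concentrated in Theorem (\ref{ufd slice}), which supplies the decomposition $B\cong(B^{\delta})^{[1]}$; granting this, no single computation is hard. The point requiring the most care is the descent of the hypotheses to the invariant ring: verifying that $B^{\delta}$ is again affine (step (iii)) and that the trivial-units condition passes from $B$ to $B^{\delta}$ (step (iv)), so that the transcendence degree $1$ characterization (\ref{ufd tr1}) actually applies. I expect this bookkeeping, rather than any isolated estimate, to be the main thing to get right.
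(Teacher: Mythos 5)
Your proposal is correct and follows essentially the same route as the paper: both reduce $(3)\Rightarrow(1)$ to Theorem (\ref{ufd slice}) to obtain $B\cong(B^{\delta})^{[1]}$ and then apply Proposition (\ref{ufd tr1}) to the invariant ring after checking it is an affine $\Ufd$ of transcendence degree $1$ with trivial units. Your write-up merely makes explicit some verifications (affineness of $B^{\delta}$, the descent of the units condition) that the paper passes over as routine.
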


\begin{proof}
    Clearly ${1}\Rightarrow{2} \Rightarrow 3$. We show $3 \Rightarrow 1$. Since $B$ is not rigid, by (\ref{ufd slice}), $B=A^{[1]}$ where $A=B^{\delta}$ for some $\delta \in \Exps (B)$  and $\Tr _kA=1$. By $(10)$ of \ref{first principles}, $A$ is a $\Ufd$. Note that   $A^*=k^*$ and $A$ is affine. By (\ref{ufd tr1}),  we have that $A= k^{[1]}$ and hence $B = k^{[2]}$.  
\end{proof}

We can slightly modify the above theorem by removing the condition that $k$ is algebraically closed and requiring that $B$ is geometrically factorial. 

\begin{theorem}\label{miya2}
    Let $B$ be an affine $k$-domain of $\Tr _kB=2$, which is geometrically factorial. Then, the following are equivalent. 
 \begin{enumerate}
     \item $B= k^{[2]}$.

     \item $\ML (B)=k$

 \end{enumerate}
\end{theorem}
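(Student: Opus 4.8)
The implication $(1)\Rightarrow(2)$ is immediate from item $(7)$ of (\ref{first principles}), so the real content is $(2)\Rightarrow(1)$. The plan is to mirror the proof of (\ref{miya}): produce a nontrivial exponential map, slice off a copy of $\A^1$ by the geometric-factoriality slice theorem (\ref{gufd slice}), and then recognise the base ring as $k^{[1]}$. The essential difference is that, because $k$ need not be algebraically closed, the final recognition step cannot quote (\ref{ufd tr1}); the field-free substitute will be Lemma (\ref{tr1}).

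First I would record the standing consequences of $\ML(B)=k$. Since $\ML(B)$ is factorially closed (item $(5)$ of (\ref{first principles})) and contains $1$, any unit $u$ of $B$ satisfies $u\cdot u^{-1}=1\in\ML(B)=k$, whence $u\in k$; thus $B^*=k^*$. Likewise an element of $B$ that is algebraic over $k$ divides a nonzero constant through its minimal polynomial, hence is a unit and so lies in $k^*$; therefore $k$ is algebraically closed in $B$, and a fortiori in every subring of $B$. Taking the trivial algebraic extension $L=k$ in the definition of geometric factoriality shows that $B$ itself is a $\Ufd$. Finally, $\ML(B)=k\neq B$ forces $B$ to be non-rigid, so it carries a nontrivial $\delta\in\Exp(B)$.

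Now I would apply (\ref{gufd slice}) to this $\delta$: being a geometrically factorial $\Ufd$ of transcendence degree $2$ with a nontrivial exponential map, $B\cong A^{[1]}$ where $A=B^{\delta}$, and $\Tr_kA=1$ by item $(4)$ of (\ref{first principles}). The ring $A$ is factorially closed in $B$ (item $(2)$) and hence a $\Ufd$ (item $(10)$); it is affine because $A^{[1]}\cong B$ is affine and finite generation descends along a polynomial extension; and $k$ is algebraically closed in $A$ by the previous paragraph. It remains only to identify $A$ with $k^{[1]}$.

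The crux, and the one place that genuinely uses more than the argument for (\ref{miya}), is showing that $A$ is not rigid; once this is known, Lemma (\ref{tr1}) applies directly, since its dichotomy forces $\ML(A)=k$ and $A\cong k^{[1]}$ for a non-rigid $A$ with $\Tr_kA=1$ in which $k$ is algebraically closed, and then $B\cong A^{[1]}\cong k^{[2]}$. The subtle point is that mere non-rigidity of $B=A^{[1]}$ is not enough, as a rigid ring can acquire a nontrivial exponential map after adjoining a variable; so I would feed in the full strength of $\ML(B)=k$ through the Semi-Rigidity Theorem (\ref{semi}). Indeed, if $A$ were rigid, that theorem would give $\ML(A^{[1]})=A$, that is $\ML(B)=A$; but $\ML(B)=k$ together with $\Tr_kA=1$ would force $A=k$, a contradiction. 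Hence $A$ is not rigid and the argument closes. The only input from outside the excerpt is the descent of finite generation along $A\subseteq A^{[1]}$, which I would either cite or check directly; every other ingredient is assembled from (\ref{gufd slice}), (\ref{semi}) and (\ref{tr1}).
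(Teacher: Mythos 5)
Your proposal is correct and follows essentially the same route as the paper: deduce non-rigidity of $B$ from $\ML(B)=k$, apply (\ref{gufd slice}) to write $B=A^{[1]}$ with $A=B^{\delta}$ of transcendence degree $1$, use the Semi-Rigidity Theorem (\ref{semi}) to rule out rigidity of $A$, and conclude $A\cong k^{[1]}$ via (\ref{tr1}). The extra details you supply (that $B^*=k^*$, that $k$ is algebraically closed in $A$ via factorial closedness of $\ML(B)$, and that $A$ is affine so (\ref{semi}) applies) are all sound and only make explicit what the paper leaves implicit.
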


\begin{proof}
 Clearly, $1 \Rightarrow 2$. We show $2 \Rightarrow 1$. Suppose $\ML (B)=k$. Then $B$ is not rigid. Consider a non-trivial $\delta \in \Exp (B)$. Then by (\ref{gufd slice}),  $B=A^{[1]}$ where $A=B^{\delta}$ and $\Tr _kA=1$. We have that $\ML (B)=k$ is algebraically closed in $B$ and hence in $A$. By  Semi-Rigidity Theorem (\ref{semi}),  $A$ is not rigid. By (\ref{tr1}),  $A = k^{[1]}$ and hence $B = k^{[2]}$.
\end{proof}

The following characterization of $k^{[2]}$ is presented in \cite[Theorem $3.8$]{NN} for a field of characteristic zero. We give a characteristic free proof of the same result. 
\begin{theorem}\label{2space}
    Let $B$ be an affine $k$-domain with $\Tr _kB=2$. Then the following are equivalent. 
    \begin{enumerate}
        \item $B= k^{[2]}$

        \item $\MLs (B)=k$

        \item $\ML (B)=k$ and $\MLs (B)\neq B$.
    \end{enumerate}
\end{theorem}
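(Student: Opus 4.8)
The plan is to establish the cycle of implications $1 \Rightarrow 2 \Rightarrow 3 \Rightarrow 1$, with essentially all the work concentrated in the last step. The first implication is immediate: if $B \cong k^{[2]}$ then $\MLs(B) = k$ by property $(7)$ of (\ref{first principles}). The second is equally formal: assuming $\MLs(B) = k$, the inclusions $k \subseteq \ML(B) \subseteq \MLs(B)$ recorded in property $(6)$ collapse to $\ML(B) = k$, while $\MLs(B) = k \neq B$ holds simply because $\Tr_k B = 2$ forces $B \neq k$.

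For $3 \Rightarrow 1$ I would first exploit the hypothesis $\MLs(B) \neq B$. By the convention in the definition of the $\MLs$-invariant (and since any non-trivial map has $B^\delta \neq B$), this can only happen when $\Exps(B) \neq \emptyset$, so I may fix a necessarily non-trivial $\delta \in \Exps(B)$ possessing a slice. The Slice Theorem (\ref{slice theorem}) then gives $B = A^{[1]}$ with $A = B^{\delta}$, and property $(4)$ of (\ref{first principles}) yields $\Tr_k A = 1$. The problem is thereby reduced to proving $A \cong k^{[1]}$, for then $B \cong k^{[2]}$.

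To reach that conclusion I would verify two structural facts about $A$ and then invoke the transcendence-degree-one theory. First, $A$ is affine over $k$: since $A \cong B/(x)$ is a quotient of the finitely generated $k$-algebra $B = A^{[1]}$, it too is finitely generated. Second, $k$ is algebraically closed in $A$, because the hypothesis $\ML(B) = k$ together with property $(5)$ makes $k$ factorially---hence algebraically---closed in $B$, and this passes to the subring $A$. Lemma (\ref{tr1}) now presents a dichotomy for $A$: either $A \cong k^{[1]}$ or $A$ is rigid. The decisive step is to eliminate rigidity: were $A$ rigid, the Semi-Rigidity Theorem (\ref{semi}) would force $\ML(A^{[1]}) = A$, i.e. $\ML(B) = A$, contradicting $\ML(B) = k \neq A$ (as $\Tr_k A = 1$). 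Hence $A \cong k^{[1]}$ and $B \cong k^{[2]}$.

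The main obstacle, as anticipated, is the implication $3 \Rightarrow 1$, and within it the fact that this theorem deliberately drops the hypotheses---that $k$ be algebraically closed and that $B$ be a $\Ufd$---present in (\ref{miya}). The argument must therefore route every step through the relative versions of these properties: the algebraic closedness of $k$ inside $A$ coming from $\ML(B) = k$, and the affineness of $A$ coming from its being a quotient of $B$, rather than from any global hypothesis. The genuinely clever move, as opposed to routine verification, is recognizing that the single equation $\ML(B) = k$ is exactly what the Semi-Rigidity Theorem needs in order to forbid $A$ from being rigid; this is the crux on which the whole reduction turns.
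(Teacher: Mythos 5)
Your proposal is correct and follows essentially the same route as the paper's own proof: extract $\delta\in\Exps(B)$ from $\MLs(B)\neq B$, apply the Slice Theorem to write $B=A^{[1]}$ with $\Tr_kA=1$, use the factorial closedness of $\ML(B)=k$ to see $k$ is algebraically closed in $A$, rule out rigidity of $A$ via the Semi-Rigidity Theorem, and conclude $A\cong k^{[1]}$ by Lemma (\ref{tr1}). Your explicit check that $A$ is affine (needed for the Semi-Rigidity Theorem) is a detail the paper leaves implicit, but otherwise the arguments coincide.
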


\begin{proof}
Clearly $1 \Rightarrow 2 \Rightarrow 3$.  We show $3\Rightarrow 1$. Since $\MLs (B)\neq B$,  so there exists a non-trivial $\delta \in \Exp (B)$ which has a slice. By Slice Theorem (\ref{slice theorem}),  we have that $B=A^{[1]}$ where $A=B^{\delta}$. By (\ref{first principles}),  we have that $\Tr _kA=1$. Since $\ML (B)$ is algebraically closed (in fact, factorially closed) in $B$,  it follows that $k$ is algebraically closed in $A$. Since $\ML (B)=k$,  so by Semi-Rigidity Theorem (\ref{semi}), $A$ is not rigid. It follows from (\ref{tr1}) that $A= k^{[1]}$ and hence $B = k^{[2]}$. 
\end{proof}

\section{Characterization of $k^{[3]}$}
The following characterization of $k^{[3]}$ is presented in \cite[Theorem $4.6$]{NN} for a field of zero characteristic. We give a characteristic free proof of the same result.

\begin{theorem}\label{affine 3space}
        Let $k$ be an algebraically closed  field and $B$ be an affine $\Ufd $ over $k$ with $\Tr _kB=3$. Then the following are equivalent. 
    \begin{enumerate}
        \item  $B= k^{[3]}$

        \item $\MLs (B)=k$

        \item $\ML (B)=k$ and $\MLs (B)\neq B$.
    \end{enumerate}
\end{theorem}

\begin{proof}
Clearly $1 \Rightarrow 2 \Rightarrow 3$. We show $3 \Rightarrow 1$. Since $\MLs (B)\neq B$,  so there exists a non-trivial $\delta \in \Exps (B)$. By Slice Theorem (\ref{slice theorem}),  we have that $B=A^{[1]}$ where $A=B^{\delta}$ and $\Tr _kA=2$. By $(10)$ of \ref{first principles}, $A$ is a $\Ufd$. Since $\ML (B)=k$ and $B=A^{[1]}$,  it follows from Semi-Rigidity Theorem (\ref{semi}) that $A$ is not rigid. $\ML (B)=k$ implies that $B^*=k^*$ and hence $A^*=k^*$.  Now,  we have that $A$ is an affine $\Ufd$ over $k$ with $\Tr _kA=2$, $A$ is not rigid and $A^*=k^*$. By (\ref{miya}),  $A = k^{[2]}$ and hence $B = k^{[3]}$.  
\end{proof}

Let $G$ be a reductive algebraic group acting on affine space $\A^n$  over a field $k$. The linearization conjecture as mentioned in \cite{Asa1}  states that the action of a reductive algebraic group is linear after a suitable polynomial change of the coordinate system on $\A^n$. In the case of positive characteristic, it was shown to be false for $n\geq 4$  in \cite{Asa2}. However,  it is not known for $n=3$. Consider the ring $$A=k[X, Y, Z, T]/(XY+Z^{p^e}+T+T^{sp})$$ where $k$ is a field of positive characteristic $p$  such that $p^e\nmid sp$, $sp\nmid p^e$,  and $m, e, s\geq 1$. It was shown in \cite{Asa1} that $A^{[1]}= k^{[4]}$. If $A= k^{[3]}$,  then $A$ is a counter-example to the linearization conjecture.

\begin{remark}\label{app}
  By \cite[Lemma $3.1$]{NG3} and using that $f(Z, T)=Z^{p^e}+T+T^{sp}$ is irreducible in $k[Z, T]$,  we can conclude that $A$ is a $\Ufd$. By $2$ of \cite[Remark $4.7$]{NG3},  we know that $\ML (A)=k$. Hence,  we can reduce the question about whether $A$ is isomorphic to $k^{[3]}$ as follows.    
\end{remark}

\begin{corollary}
    Let $k$ be an algebraically closed field of positive characteristic and $A=k[X, Y, Z, T]/(XY+Z^{p^e}+T+T^{sp})$. Then the following are equivalent. 
    \begin{enumerate}
        \item $A= k^{[3]}$.

        \item $\MLs (A)\neq A$. This is equivalent to the existence of an exponential map on $A$ that has a slice.
    \end{enumerate}
\end{corollary}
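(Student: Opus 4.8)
```latex
The plan is to show that this corollary is an almost immediate consequence of the main characterization theorem (\ref{affine 3space}), once the setup from Remark (\ref{app}) is in place. The strategy is to verify that the ring $A$ satisfies all the standing hypotheses of (\ref{affine 3space}), and then simply translate the equivalence $(2)\Leftrightarrow(3)$ of that theorem into the present statement.

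First I would record the hypotheses. By Remark (\ref{app}), $A$ is a $\Ufd$ over the algebraically closed field $k$, and $\ML(A)=k$. Since $A$ is visibly a finitely generated $k$-algebra (a quotient of a polynomial ring in four variables), $A$ is an affine $k$-domain; here one uses that the defining polynomial $XY+Z^{p^e}+T+T^{sp}$ is prime, which follows from the irreducibility of $f(Z,T)=Z^{p^e}+T+T^{sp}$ in $k[Z,T]$ as noted in (\ref{app}). One also needs $\Tr_kA=3$: the quotient of a $4$-dimensional polynomial ring by a single irreducible (hence height-one, by the $\Ufd$ property of $k[X,Y,Z,T]$) relation has transcendence degree $3$. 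Thus $A$ is an affine $\Ufd$ over an algebraically closed field $k$ with $\Tr_kA=3$, so (\ref{affine 3space}) applies to $A$.

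Next I would invoke the theorem. Applying (\ref{affine 3space}) to $B=A$, the three conditions $A\cong k^{[3]}$, $\MLs(A)=k$, and $(\ML(A)=k$ and $\MLs(A)\neq A)$ are equivalent. Since we already know $\ML(A)=k$ from (\ref{app}), condition $(3)$ of (\ref{affine 3space}) collapses to the single requirement $\MLs(A)\neq A$. Chaining through the equivalences gives $A\cong k^{[3]}$ if and only if $\MLs(A)\neq A$, which is exactly the asserted equivalence $(1)\Leftrightarrow(2)$.

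Finally, for the parenthetical clause in $(2)$, I would recall the definition of $\MLs$: we have $\MLs(A)=\bigcap_{\delta\in\Exps(A)}A^{\delta}$ with the convention that $\MLs(A)=A$ precisely when $\Exps(A)=\emptyset$. Hence $\MLs(A)\neq A$ forces $\Exps(A)\neq\emptyset$, i.e.\ the existence of a non-trivial exponential map with a slice; conversely, any such map $\delta$ has ring of invariants $A^{\delta}\subsetneq A$ by (\ref{first principles}), so $\MLs(A)\subseteq A^{\delta}\subsetneq A$. This yields the stated reformulation ``$\MLs(A)\neq A$ is equivalent to the existence of an exponential map on $A$ that has a slice.'' The only point requiring any genuine care is the bookkeeping that confirms $A$ meets every hypothesis of (\ref{affine 3space}); there is no real obstacle beyond that verification, since the heavy lifting is done by the main theorem and by the external inputs cited in (\ref{app}).
```
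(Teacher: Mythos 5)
Your proposal is correct and follows exactly the paper's route: the paper's proof is the one-line ``This follows from (\ref{affine 3space}) and (\ref{app})'', and your write-up simply spells out the same deduction (verifying that $A$ is an affine $\Ufd$ of transcendence degree $3$ with $\ML(A)=k$, then specializing the equivalence in Theorem \ref{affine 3space}). The extra bookkeeping you include, such as the irreducibility of the defining polynomial and the unpacking of the convention $\MLs(A)=A$ when $\Exps(A)=\emptyset$, is accurate and consistent with what the paper leaves implicit.
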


\begin{proof}
    This follows from (\ref{affine 3space}) and (\ref{app}).
\end{proof}

\begin{remark}
    The above corollary also follows directly from \cite[Theorem 4]{Rus}. 
\end{remark}

We can slightly modify (\ref{affine 3space}) by removing the condition that $k$ is algebraically closed and requiring that $B$ is a geometrically factorial and $k$ is an infinite field. We prove the following theorem in section $5$ after corollary (\ref{temp}).

\begin{theorem}\label{3 space}
    Let $B$ be an affine $\Ufd $ over $k$ with $\Tr _kB=3$, where $k$ is an infinite field. Suppose $B$ is geometrically factorial. Then the following are equivalent. 
    \begin{enumerate}
        \item  $B= k^{[3]}$

        \item $\MLs (B)=k$

        \item $\ML (B)=k$ and $\MLs (B)\neq B$.
    \end{enumerate}
\end{theorem}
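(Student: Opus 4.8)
The plan is to run the same reduction used for \ref{affine 3space}, but to replace the appeal to the algebraically closed characterization \ref{miya} by a descent to transcendence degree one that is legitimate over an arbitrary infinite field. The implications $1\Rightarrow 2\Rightarrow 3$ are immediate: $1\Rightarrow 2$ is property $7$ of \ref{first principles}, and $2\Rightarrow 3$ follows from $k\subseteq\ML(B)\subseteq\MLs(B)$ together with $B\neq k$ (since $\Tr_kB=3$). So the whole content lies in $3\Rightarrow 1$.

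First I would use $\MLs(B)\neq B$ to produce a non-trivial $\delta\in\Exps(B)$ and invoke the Slice Theorem \ref{slice theorem} to write $B=A^{[1]}$ with $A=B^{\delta}$ and $\Tr_kA=2$; by properties $2$ and $10$ of \ref{first principles}, $A$ is a $\Ufd$. The next point is that $A$ is again geometrically factorial: for every algebraic extension $L/k$ one has $(A\tens{k}L)^{[1]}\cong B\tens{k}L$, which is a $\Ufd$ by hypothesis, and since $R^{[1]}$ being a $\Ufd$ forces the domain $R$ to be a $\Ufd$, it follows that $A\tens{k}L$ is a $\Ufd$. Because $\ML(A^{[1]})=\ML(B)=k\neq A$, the Semi-Rigidity Theorem \ref{semi} shows that $A$ is not rigid, so $A$ admits a non-trivial exponential map $\sigma$.

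Next I would peel off a second variable. Applying \ref{gufd slice} to the geometrically factorial $\Ufd$ $A$ together with $\sigma$ gives $A\cong C^{[1]}$, where $C=A^{\sigma}$ is a $\Ufd$ (properties $2$ and $10$) with $\Tr_kC=1$, and hence $B\cong C^{[2]}$. Since $\ML(B)=k$ is factorially closed, hence algebraically closed, in $B$, and $k\subseteq C\subseteq B$, the field $k$ is algebraically closed in $C$. The crucial step is to transfer the vanishing of the Makar-Limanov invariant down to $C$: by the stability of the $\ML$-invariant in transcendence degree one (the equality $\ML(C^{[n]})=\ML(C)$ recalled in the introduction, applied with $n=2$), we obtain $\ML(C)=\ML(C^{[2]})=\ML(B)=k$. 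Now $C$ is a $k$-domain with $\Tr_kC=1$, with $k$ algebraically closed in $C$ and $\ML(C)=k$, so \ref{tr1} yields $C\cong k^{[1]}$; therefore $A\cong k^{[2]}$ and $B\cong k^{[3]}$.

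The main obstacle is exactly this descent of the condition $\ML=k$ from $B$ to the one-dimensional ring $C$, since there is no direct stability statement relating $\ML(A)$ to $\ML(A^{[1]})=\ML(B)$ in transcendence degree two. This is where geometric factoriality is indispensable: it is what lets \ref{gufd slice} realize $A$ as $C^{[1]}$ and thereby reduce everything to the transcendence-degree-one stability $\ML(C^{[2]})=\ML(C)$, which holds over any field. Equivalently, once $\ML(C)=k$ is in hand one may conclude by applying the geometrically factorial characterization \ref{miya2} to $A=C^{[1]}$ in place of \ref{tr1}. The infinite-field hypothesis is not needed for the skeleton above but enters through the auxiliary stability results of section $5$ (in particular \ref{temp}), which underpin the descent in the form actually used there.
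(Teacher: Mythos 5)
Your argument is correct and follows essentially the paper's route: the paper likewise peels off one variable via the Slice Theorem to get $B=A^{[1]}$, notes that $A$ inherits geometric factoriality and non-rigidity, and then invokes the packaged stability result (\ref{stab2}) --- whose proof is exactly your descent through (\ref{gufd slice}) to a curve $C$ and the transcendence-degree-one stability (\ref{stab}) --- before concluding $\ML(A)=k$ and applying (\ref{miya2}). One small correction to your closing remark: the infinite-field hypothesis \emph{is} used in your skeleton, since the step $\ML(C^{[2]})=\ML(C)$ is Theorem (\ref{stab}), which the paper establishes only for infinite $k$ (its proof needs enough scalars $\lambda$ to force $\delta$ to restrict to $C$), so it does not ``hold over any field'' as stated.
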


\section{Some Properties of $\ML$ and $\MLs $  invariants}
\noindent
In this section, we first describe some properties related to $\ML$ and $\MLs$ invariants and look at some results relating the two invariants.

\begin{lemma}\label{ML*}
    Let $B$ be an affine $k$-domain. Then the following hold.
    \begin{enumerate}
        \item $\MLs (B^{[n]})\subseteq \ML (B)$

        \item $B$ is rigid $\Leftrightarrow$ $\MLs (B^{[1]})=B$.
    \end{enumerate}
    
\end{lemma}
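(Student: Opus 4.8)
The plan is to prove the two assertions of Lemma \ref{ML*} by leveraging the stability behavior of exponential maps under polynomial extensions, building on part (8) of Remark \ref{first principles} and the Semi-Rigidity Theorem \ref{semi}. For part (1), I want to show $\MLs(B^{[n]}) \subseteq \ML(B)$. The key observation is that $\MLs$ is an intersection over exponential maps \emph{with a slice}, so I need to produce, for each non-trivial $\delta \in \Exp(B)$, a corresponding exponential map on $B^{[n]}$ that has a slice and whose invariant ring meets $B$ in $B^\delta$. First I would extend an arbitrary $\delta \in \Exp(B)$ to $B^{[n]} = B[X_1,\dots,X_n]$ by fixing the $X_i$; by part (8) its ring of invariants is $B^\delta[X_1,\dots,X_n]$. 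The subtlety is that this extended map need not have a slice even if $\delta$ does, so the real work is to modify it.

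The natural fix is to compose or combine the extended $\delta$ with a shift exponential map in one of the new variables so as to manufacture a slice. Concretely, I expect that for the extended map the variable $X_1$ (together with the shift $\delta_{X_1}$) can be used to build an exponential map on $B^{[n]}$ that has a slice and has ring of invariants contained in $B^\delta[X_2,\dots,X_n]$, or more to the point, meets $B$ exactly in $B^\delta$. Then intersecting over all such constructed sliced maps on $B^{[n]}$ gives $\MLs(B^{[n]}) \subseteq \bigcap_\delta B^\delta = \ML(B)$, where the intersection is taken over all non-trivial $\delta \in \Exp(B)$. The content of part (1) is precisely that passing to $B^{[n]}$ converts arbitrary exponential maps of $B$ (used to define $\ML(B)$) into \emph{sliced} ones on $B^{[n]}$ (used to define $\MLs(B^{[n]})$), which is why the inclusion lands inside $\ML(B)$ rather than $\MLs(B)$.

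For part (2), the equivalence $B$ rigid $\Leftrightarrow \MLs(B^{[1]}) = B$, I would argue both directions using part (1) and Theorem \ref{semi}. For the forward direction, if $B$ is rigid then by the Semi-Rigidity Theorem $\ML(B^{[1]}) = B$; since $B \subseteq \MLs(B^{[1]})$ is automatic from $k \subseteq \ML \subseteq \MLs$ applied appropriately and the general inclusion $\MLs(B^{[1]}) \subseteq B^{[1]}$, I need to pin $\MLs(B^{[1]})$ between $B$ and $\ML(B^{[1]}) = B$. Here part (1) with $n = 1$ gives $\MLs(B^{[1]}) \subseteq \ML(B) = B$ (using rigidity $\ML(B) = B$), and the shift map on $B^{[1]}$ in the new variable has a slice and fixes $B$, forcing $\MLs(B^{[1]}) \subseteq B$, while $B \subseteq \MLs(B^{[1]})$ since $B$ is fixed by every sliced map arising this way; combining gives equality. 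For the converse, if $\MLs(B^{[1]}) = B$ then $B \subseteq \ML(B^{[1]}) \subseteq \MLs(B^{[1]}) = B$ forces $\ML(B^{[1]}) = B$, and Theorem \ref{semi} then yields that $B$ is rigid.

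The step I expect to be the main obstacle is the slice-manufacturing construction in part (1): showing that for an arbitrary $\delta \in \Exp(B)$ one can produce on $B^{[n]}$ a genuinely sliced exponential map whose invariants recover $B^\delta$ on $B$. The danger is that naively fixing the $X_i$ preserves the absence of a slice, so I must instead exploit the extra variables to guarantee a unit leading coefficient; I anticipate using a shift in a new variable combined with the extended $\delta$, and verifying both the exponential-map axioms (Definition conditions (1) and (2)) for the composite and that a slice exists, with the invariant ring computed via the degree function in Remark \ref{remark1}. Once this construction is in hand, the inclusions in both parts follow formally from the definitions of $\ML$ and $\MLs$ as intersections.
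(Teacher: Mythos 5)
Your plan for part (1) is essentially the paper's proof, and the construction you flag as ``the main obstacle'' is easier than you fear. The paper does exactly what you anticipate: given $\delta \in \Exp(B)$, define $\tilde{\delta}_i$ on $B[X_1,\dots,X_n]$ by $\tilde{\delta}_i|_B = \delta$, $\tilde{\delta}_i(X_i) = X_i + t$ and $\tilde{\delta}_i(X_j) = X_j$ for $j \neq i$. The exponential-map axioms are checked separately on $B$ (where they hold because $\delta$ is exponential) and on the variables (where the shift computation $X_i + s + t$ is immediate), and $X_i$ is automatically a slice since it has $\delta$-degree $1$ (minimal possible) with $D_1(X_i)=1$. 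One caution: ``compose'' two exponential maps is not the right operation (the composite of two exponential maps need not be one); what works is defining a single map on generators as above. You do not need to compute the full invariant ring of $\tilde{\delta}_i$ (it is generally larger than $B^\delta[X_j : j\neq i]$); you only need that the plain shift maps force $\MLs(B^{[n]}) \subseteq B$ and that for $b \in B$ one has $\tilde{\delta}_i(b) = \delta(b)$, so invariance under $\tilde{\delta}_i$ gives $b \in B^\delta$. Your forward direction of part (2) also matches the paper once you replace the justification ``$B$ is fixed by every sliced map arising this way'' (which only controls the constructed maps, not all of $\Exps(B^{[1]})$) by the clean chain $B = \ML(B^{[1]}) \subseteq \MLs(B^{[1]}) \subseteq \ML(B) = B$.

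The genuine gap is in your converse direction of part (2). You assert the inclusion $B \subseteq \ML(B^{[1]})$ as the first link of your chain, but this inclusion is false for a general affine $k$-domain: take $B = k^{[1]}$, so that $\ML(B^{[1]}) = \ML(k^{[2]}) = k \not\supseteq B$. Under your hypothesis $\MLs(B^{[1]}) = B$ the inclusion does eventually hold, but only because $B$ is rigid and the Semi-Rigidity Theorem then gives $\ML(B^{[1]}) = B$ --- which is precisely the conclusion you are trying to reach, so the argument is circular. The repair is one line and uses only part (1): from $\MLs(B^{[1]}) \subseteq \ML(B)$ and the hypothesis you get $B = \MLs(B^{[1]}) \subseteq \ML(B) \subseteq B$, hence $\ML(B) = B$ and $B$ is rigid; the Semi-Rigidity Theorem is not needed in this direction at all. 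This is the route the paper takes.
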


\begin{proof}
    \textit{(1)} Let $B[X_1, \dots, X_n]=B^{[n]}$. Since there are shift exponential maps on $B^{[n]}$,  we have $\MLs (B^{[n]})\subseteq B$. Suppose  $\delta \in \Exp (B)$,  we can extend it to $\Tilde{\delta}_i\in \Exps (B[X_1, \dots, X_n])$ by defining $\Tilde{\delta}_i(X_j)=X_j \ \forall \  j\neq i$ and $\Tilde{\delta}_i(X_i)=X_i+t$. Now suppose $b \in \MLs (B^{[n]})\subseteq B$,  then $\Tilde{\delta_i}(b)=b$ and hence $\delta(b)=b$ for any $\delta \in \Exp (B).$   It follows that $\MLs (B^{[n]})\subseteq \ML (B)$.

    \textit{(2)} If $\MLs (B^{[1]})=B$,  then by previous part it follows that $\ML (B)=B$ and hence $B$ is rigid. Now,  suppose $B$ is rigid. By Semi-Rigidity Theorem (\ref{semi}),  $\ML (B^{[1]})=B$.  Since $\ML (B^{[1]})\subseteq \MLs (B^{[1]})$,  thus it follows that $B\subseteq \MLs (B^{[1]})$ and again from previous part we can conclude that $\MLs (B^{[1]})=B$.   
\end{proof}

The following result is about the stability of the $\ML$ invariant for $k$-domains of transcendence degree $1$. This is proved in \cite[Theorem $2.28$]{Gene} when $k$ is of zero characteristic. We adapt the same proof to prove the result for arbitrary characteristic. When $k$ is algebraically closed,  a proof of the following Theorem can be found in \cite{CML}.

\begin{theorem}\label{stab}
     Let $B$ be a $k$-domain with $\Tr _kB=1$, where $k$ is an infinite field. Then $\ML (B^{[n]})=\ML (B)$ for every integer $n\geq 0$.
\end{theorem}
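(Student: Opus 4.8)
The plan is to prove only the nontrivial inclusion $\ML(B)\subseteq\ML(B^{[n]})$, since $\ML(B^{[n]})\subseteq\ML(B)$ is immediate from item $(8)$ of $(\ref{first principles})$. First I would record the dichotomy forced by transcendence degree $1$. Write $k_0$ for the algebraic closure of $k$ in $B$, so $\Tr_k k_0=0$. By items $(2)$ and $(4)$ of $(\ref{first principles})$, any nontrivial $\delta\in\Exp(B)$ has $B^{\delta}$ factorially closed in $B$ with $\Tr_k B^{\delta}=0$; being factorially (hence algebraically) closed and containing $k$, such a ring contains every element of $B$ algebraic over $k$, while having transcendence degree $0$ it is contained in $k_0$. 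Thus $B^{\delta}=k_0$ for \emph{every} nontrivial $\delta$. Consequently either $B$ is rigid, so $\ML(B)=B$, or $B$ is not rigid, so $\ML(B)=k_0$. I would handle these two cases separately.

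In the non-rigid case the result holds for all $n$ simultaneously. Since the $X_i$ are transcendental over $B$, any element of $B^{[n]}$ algebraic over $k$ has degree $0$ in the $X_i$ (otherwise its powers have strictly increasing total $X$-degree and are $k$-linearly independent), hence lies in $k_0$; so $k_0$ is also the algebraic closure of $k$ in $B^{[n]}$. Because each ring of invariants $\big(B^{[n]}\big)^{\Delta}$ is factorially closed in $B^{[n]}$ and contains $k$, it contains $k_0$, giving $\ML(B^{[n]})\supseteq k_0$. Together with $\ML(B^{[n]})\subseteq\ML(B)=k_0$ this yields $\ML(B^{[n]})=k_0=\ML(B)$, completing this case.

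In the rigid case $\ML(B)=B$, and I must show every nontrivial $\Delta\in\Exp(B^{[n]})$ fixes $B$ pointwise. For $n=1$ this is precisely the Semi-Rigidity Theorem $(\ref{semi})$. For general $n$, set $R=B^{[n]}$, $A=R^{\Delta}$, $K=\mathrm{Frac}(B)$ and $L=\mathrm{Frac}(A)$. Picking a local slice $x$ with top coefficient $c=D_m(x)\in A$ (where $m=\deg_{\Delta}(x)$), item $(3)$ of $(\ref{first principles})$ gives $R[c^{-1}]=A[c^{-1}][x]$ with $x$ transcendental over $A$, whence $\mathrm{Frac}(R)=L(x)$. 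I claim it suffices to establish $K\subseteq L$: for then any $b\in B\subseteq K\subseteq L$ can be written $b=a_1/a_2$ with $a_1,a_2\in A$, so $a_2 b=a_1\in A$ with $a_2,b\in R$, and factorial closedness of $A$ in $R$ (item $(2)$) forces $b\in A=R^{\Delta}$; hence $B\subseteq R^{\Delta}$, as needed.

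The hard part, therefore, is to prove $K\subseteq L$ in the rigid case for $n\ge 2$, equivalently that no nonzero element of $B$ has positive $\Delta$-degree. Here I would adapt Freudenburg's argument for \cite[Theorem $2.28$]{Gene}: assuming $K\not\subseteq L$, one exploits the explicit description $R[c^{-1}]=A[c^{-1}][x]$ together with a genericity argument to descend $\Delta$ to a \emph{nontrivial} exponential map on $B$ itself, contradicting the rigidity of $B$. This descent is the principal obstacle, and it is where the hypothesis that $k$ is infinite enters, through the choice of a suitable specialization of $X_1,\dots,X_n$; it is also the step demanding the most care in positive characteristic, since one must keep track of the entire family of iterative derivatives $D_i$ associated with $\Delta$ rather than a single locally nilpotent derivation. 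Once this contradiction is secured, the rigid case follows, and combining it with the non-rigid case proves $\ML(B^{[n]})=\ML(B)$ for all $n\ge 0$.
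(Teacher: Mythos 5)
Your treatment of the non-rigid case is correct and is essentially the paper's: every nontrivial $\delta\in\Exp(B)$ has $B^{\delta}$ equal to the algebraic closure $k_0$ of $k$ in $B$, and factorial (hence algebraic) closedness of invariant rings in $B^{[n]}$ gives $k_0\subseteq \ML(B^{[n]})\subseteq \ML(B)=k_0$. The problem is the rigid case for $n\ge 2$, which is the actual content of the theorem, and there your proposal stops exactly at the decisive step. You reduce correctly to showing $K\subseteq L$, i.e.\ that every nontrivial $\Delta\in\Exp(B^{[n]})$ fixes $B$ pointwise, but then you only announce that you ``would adapt Freudenburg's argument'' via ``a genericity argument to descend $\Delta$ to a nontrivial exponential map on $B$'' --- and you yourself call this descent ``the principal obstacle'' without carrying it out. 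As written, the core of the proof is missing: no mechanism is supplied for producing the restriction of $\Delta$ to $B$, and in positive characteristic one cannot simply specialize a single derivation, so a reader has no way to verify that the intended adaptation goes through.

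The paper closes precisely this gap by a different and cleaner mechanism. Since $B$ is rigid of transcendence degree one, it is not of the form $L^{[1]}$ for any algebraic extension $L$ of $k$, so by \cite[Theorem $3.3$]{AEH} every $k$-algebra automorphism $\phi$ of $B^{[n]}$ satisfies $\phi(B)=B$. By item $(4)$ of (\ref{remark1}), each evaluation $\epsilon_{\lambda}\circ\Delta$ with $\lambda\in k$ is such an automorphism, so $\sum_i D_i(b)\lambda^i\in B$ for every $b\in B$ and every $\lambda\in k$; since $k$ is infinite, a Vandermonde argument recovers each iterative derivative $D_i(b)$ as a $k$-linear combination of these values, hence $D_i(b)\in B$, so $\Delta$ restricts to an exponential map on $B$, which rigidity forces to be trivial. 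Therefore $B\subseteq\ML(B^{[n]})$. This is where the hypothesis that $k$ is infinite genuinely enters. To complete your proposal you would need either to reproduce this AEH-based argument or to supply in full detail the descent you allude to; in its current form the rigid case is asserted rather than proved.
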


\begin{proof}
    If $B$ is not rigid, then by $(2)$ of (\ref{first principles}),  $B=L^{[1]}$ where $L$ is the algebraic closure of $k$ in $B$. Then,  clearly $\ML (B^{[n]})=\ML (B)=L$. If $B$ is rigid,  then it is not of the form $L^{[1]}$ for any algebraic extension of $k$. Using \cite[Theorem $3.3$]{AEH},  we get that $\phi(B)=B$ for any $k$-algebra automorphism of $B^{[n]}$. By $(4)$ of (\ref{remark1}),  for  any $\delta\in \Exp (B^{[n]})$ and $\lambda \in k$,  we have that $\epsilon_{\lambda} \circ \delta$ is a $k$-automorphism of $B^{[n]}$. Hence we have that $\epsilon_\lambda \circ \delta(B)=B$ for all $\lambda \in k$. Since $k$ is an infinite field, it follows that $\delta$  restricts to $B$. As $B$ is rigid, thus $\delta$ is identity on $B$. It follows that $B\subseteq \ML (B^{[n]})$ and hence $\ML (B^{[n]})=B=\ML (B)$.
\end{proof}

\begin{proposition}\label{stab2}
    Let $B$ be a $\Ufd$ over a field $k$ with $\Tr _kB=2$, where $k$ is an infinite field. Suppose $B$ is geometrically factorial and $B$ is not rigid, then $\ML (B^{[n]})=\ML (B)$. 
\end{proposition}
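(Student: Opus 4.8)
The plan is to reduce the transcendence-degree-2 statement to the transcendence-degree-1 case (Theorem \ref{stab}) by slicing off one dimension, exactly in the spirit of the proofs of (\ref{miya2}) and (\ref{3 space}). Since $B$ is not rigid, there exists a non-trivial $\delta \in \Exp(B)$. Because $B$ is a geometrically factorial $\Ufd$ of transcendence degree $2$ over $k$, Corollary (\ref{gufd slice}) applies and yields $B \cong A^{[1]}$, where $A = B^{\delta}$ and $\Tr_k A = 1$. The idea is then to compute $\ML(B^{[n]})$ through the chain $B^{[n]} \cong A^{[n+1]}$, apply the transcendence-degree-$1$ stability theorem to $A$, and compare the result with $\ML(B)$.

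First I would set $A = B^{\delta}$ and record that $A$ is a $\Ufd$ (by $(10)$ of (\ref{first principles})), that $\Tr_k A = 1$, and that $B \cong A^{[1]}$ via (\ref{gufd slice}). Consequently $B^{[n]} \cong A^{[n+1]}$ as $k$-algebras, so by Theorem (\ref{stab}) applied to the transcendence-degree-$1$ domain $A$ we obtain
\begin{equation*}
\ML(B^{[n]}) = \ML(A^{[n+1]}) = \ML(A).
\end{equation*}
In particular, taking $n = 0$ gives $\ML(B) = \ML(A^{[1]}) = \ML(A)$, and therefore $\ML(B^{[n]}) = \ML(A) = \ML(B)$ for every $n$, which is exactly the claim.

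The main point requiring care is the applicability of Theorem (\ref{stab}) to $A$: that theorem is stated for a $k$-domain of transcendence degree $1$ over an infinite field $k$, and here $A$ is precisely such a domain, so the hypothesis is met. I would also verify that $A$ inherits being a domain (clear, as a subring of the domain $B$) and that the chain of isomorphisms $B^{[n]} \cong (A^{[1]})^{[n]} \cong A^{[n+1]}$ is a genuine $k$-algebra isomorphism so that the $\ML$-invariants genuinely coincide; since the $\ML$-invariant is an isomorphism invariant, this is immediate once the isomorphism is established.

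The step I expect to be the main obstacle — or at least the one needing the most justification — is confirming that the stability result (\ref{stab}) can be invoked at all without circularity, since this proposition sits in the same section as (\ref{stab}) and is logically downstream of it; the whole argument hinges on having $A$ of transcendence degree exactly $1$, which is guaranteed by $(4)$ of (\ref{first principles}) once a non-trivial exponential map is produced. Everything else is a formal passage through $B \cong A^{[1]}$, and no characteristic-specific difficulty arises because both (\ref{gufd slice}) and (\ref{stab}) are already characteristic-free.
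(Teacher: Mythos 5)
Your proposal is correct and follows essentially the same route as the paper: decompose $B$ as $A^{[1]}$ via (\ref{gufd slice}) using non-rigidity, then apply the transcendence-degree-$1$ stability theorem (\ref{stab}) to $A$ to identify $\ML(B^{[n]})=\ML(A^{[n+1]})=\ML(A)=\ML(A^{[1]})=\ML(B)$. There is no gap and no circularity; the argument matches the paper's proof step for step.
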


\begin{proof}
    Since $B$ is not rigid. Then by (\ref{gufd slice}),  we have that $B=A^{[1]}$ where $A=B^{\delta}\ $ for some non-trivial $\delta \in \Exp (B)$. We have that $\ML (B^{[n]})=\ML (A^{[n+1]})$ and $\ML (B)=\ML (A^{[1]})$. By (\ref{stab}), $\ML (A^{[m]})=\ML (A)$ for all $m\geq 0$ and the result follows. 
\end{proof}

\begin{corollary}\label{temp}
    Let $B$ be a $\Ufd $ over an algebraically closed field $k$ such that $\Tr _kB=2$. Suppose $B$ is not rigid, then $\ML (B^{[n]})=\ML (B)$.
\end{corollary}

We now prove (\ref{3 space}) using (\ref{stab2}).
\begin{proof}[\textbf{Proof of Theorem 4.5}]
Clearly $1 \Rightarrow 2 \Rightarrow 3$. We show $3 \Rightarrow 1$. Since $\MLs (B)\neq B$,  so there exists a non-trivial $\delta \in \Exps (B)$. By Slice Theorem (\ref{slice theorem}), $B=A^{[1]}$ where $A=B^{\delta}$ and $\Tr _kA=2$. $B=A^{[1]}$ and $B$ is geometrically factorial implies that $A$ is geometrically factorial. Since $\ML (B)=k$ and $B=A^{[1]}$,  it follows from Semi-Rigidity Theorem (\ref{semi}) that $A$ is not rigid. By \ref{stab2},we get that $\ML (A)=k$.  By \ref{miya2}, $A = k^{[2]}$ and hence $B = k^{[3]}$.  
\end{proof}

\begin{corollary}[\emph{Cancellation of $k^{[2]}$}]\label{cancel2}
    Let $B$ be a $k$-domain such that $B^{[1]}= k^{[3]}$. Then $B= k^{[2]}$.
\end{corollary}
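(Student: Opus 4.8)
The plan is to verify that $B$ satisfies the hypotheses of (\ref{miya2}) and then read off $B\cong k^{[2]}$. Concretely, I must show that $B$ is an affine, geometrically factorial $k$-domain with $\Tr_k B=2$ and $\ML(B)=k$; the characterization (\ref{miya2}) then delivers the conclusion at once.

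The structural hypotheses are the easy part. Since $\Tr_k B^{[1]}=3$ we have $\Tr_k B=2$, and because $B$ is a retract of the finitely generated $k$-algebra $B^{[1]}\cong k^{[3]}$ (via evaluation of the extra variable at $0$), $B$ is affine. A degree count shows $B$ is factorially closed in $B^{[1]}=B[X]$: if $fg\in B$ with $f,g\in B[X]$ then $\deg_X f+\deg_X g=0$, forcing $f,g\in B$. As $B^{[1]}\cong k^{[3]}$ is a $\Ufd$, property $(10)$ of (\ref{first principles}) gives that $B$ is a $\Ufd$. Running the same argument over an arbitrary algebraic extension $L/k$, where $(B\tens{k}L)^{[1]}\cong B^{[1]}\tens{k}L\cong L^{[3]}$ is a $\Ufd$ in which $B\tens{k}L$ is factorially closed, shows that $B\tens{k}L$ is a $\Ufd$; hence $B$ is geometrically factorial. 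Finally $B^*\subseteq (B^{[1]})^*=k^*$, so $B^*=k^*$.

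The crux is the computation $\ML(B)=k$. First, $B$ is not rigid: were it rigid, the Semi-Rigidity Theorem (\ref{semi}) would give $\ML(B^{[1]})=B$, contradicting $\ML(B^{[1]})=\ML(k^{[3]})=k$ by $(7)$ of (\ref{first principles}). Now $B$ is a non-rigid, geometrically factorial $\Ufd$ over $k$ with $\Tr_k B=2$, so the stability result (\ref{stab2}) yields $\ML(B)=\ML(B^{[1]})=k$, and (\ref{miya2}) then gives $B\cong k^{[2]}$.

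The one gap is that (\ref{stab2}) requires $k$ to be infinite, so the main obstacle is the finite-field case. I would resolve it by base change: for $\bar k$ an algebraic closure, $\bar B=B\tens{k}\bar k$ is a domain (by geometric factoriality) with $\bar B^{[1]}\cong\bar k^{[3]}$, and since $\bar k$ is infinite the argument above gives $\bar B\cong\bar k^{[2]}$. As a finite field is perfect, $B$ is then a separable form of $\A^2$ and descends to $B\cong k^{[2]}$ by Kambayashi's theorem \cite{K}. I expect this descent step to be the delicate point, the remainder being routine verification of the hypotheses of (\ref{miya2}).
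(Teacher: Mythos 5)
Your proposal is correct and follows essentially the same route as the paper's own proof: deduce geometric factoriality of $B$ from $B^{[1]}\cong k^{[3]}$, use the Semi-Rigidity Theorem (\ref{semi}) to see $B$ is not rigid, apply the stability result (\ref{stab2}) to get $\ML(B)=\ML(B^{[1]})=k$, conclude via (\ref{miya2}), and handle finite $k$ by Kambayashi's theorem over the (perfect) base field. Your write-up merely makes explicit some verifications (affineness via the retraction, factorial closedness of $B$ in $B[X]$) that the paper leaves implicit.
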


\begin{proof}
We first prove the result, assuming that $k$ is an infinite field. Since $B^{[1]}= k^{[3]}$,  it follows that $B^{[1]}$ is geometrically factorial and hence $B$ is geometrically factorial. By Semi-Rigidity Theorem (\ref{semi}),  $B$ is not rigid. By (\ref{stab2}),  $ML(B)=ML(B^{[1]})=k$ and it follows by (\ref{miya2}) that $B= k^{[2]}$.

When $k$ is a finite field, then since finite fields are perfect, the result follows by Kambayashi's Theorem \cite{K} stated in \cite[Theorem $5.2$]{Gene}.
\end{proof}

The strong cancellation of $k^{[2]}$ states that if $B^{[n]}=k^{[n+2]}$ for some integer $n\geq 2$, then $B=k^{[2]}$. This was proved in \cite[Theorem 4]{Rus} for any perfect field $k$. We prove if for any field $k$ under some additional conditions.  
\begin{corollary}\label{strong}
    Let $B$ be a $k$-domain such that $B^{[n]}=k^{[n+2]}$ for some integer $n\geq 2$. Suppose $B$ is not rigid. Then $B=k^{[2]}$.
\end{corollary}

\begin{proof}
    If $k$ is a finite field, the result follows from \cite[Theorem 4]{Rus}. Since $B^{[n]}=k^{[n+2]}$, it follows that $B$ is geometrically factorial. Since $B$ is not rigid, by (\ref{stab2}), $\ML (B)=\ML (B^{[2]}=k$. It follows by (\ref{miya2}) that $B=k^{[2]}$.
\end{proof}

The above result raises the following question.

\begin{question}
    Let $B$ be a $k$-domain such that $B^{[n]}=k^{[n+2]}$ for some positive integer $n$. Is $B$ not rigid?
\end{question}

\begin{remark}
     By (\ref{strong}), we note that a positive answer to the above question will lead to strong cancellation of $k^{[2]}$ for any field $k$.
\end{remark}

\begin{corollary}
        Let $B$ be an affine $k$-domain. Suppose there exists $\delta \in \Exps (B)$ such that $B^{\delta}$ is rigid, then $\ML (B)=\MLs (B)$.
\end{corollary}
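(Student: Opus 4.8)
The plan is to prove the two inclusions $\ML(B)\subseteq \MLs(B)$ and $\MLs(B)\subseteq \ML(B)$ separately, and in fact to show that both invariants coincide with $B^{\delta}$. The first inclusion is immediate and costs nothing: since $\Exps(B)\subseteq \Exp(B)$, intersecting $B^{\epsilon}$ over the smaller family $\Exps(B)$ produces a larger ring, so $\ML(B)\subseteq \MLs(B)$. This is precisely $(6)$ of (\ref{first principles}). Hence all the content lies in establishing the reverse inclusion $\MLs(B)\subseteq \ML(B)$.

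To this end I would first exploit the hypothesis that $\delta$ has a slice. By the Slice Theorem (\ref{slice theorem}), $B=(B^{\delta})^{[1]}$; writing $A=B^{\delta}$, we have $B\cong A^{[1]}$. The evaluation homomorphism $B=A[x]\longrightarrow A$ sending $x$ to $0$ is surjective and restricts to the identity on $A$, so it exhibits $A$ as a retract of $B$; consequently $A\cong B/(x)$ is a quotient of the finitely generated $k$-algebra $B$, and therefore $A$ is itself an affine $k$-domain. This verification is the one point that requires care, because the Semi-Rigidity Theorem demands that its input ring be affine or finitely generated, and it is not a priori evident that the ring of invariants $B^{\delta}$ inherits finite generation; the retract argument supplies exactly this.

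With $A$ now known to be an affine $k$-domain and rigid by hypothesis, the Semi-Rigidity Theorem (\ref{semi}) yields $\ML(A^{[1]})=A$, that is, $\ML(B)=B^{\delta}$. On the other hand, since $\delta\in \Exps(B)$, the ring $B^{\delta}$ occurs among those intersected to form $\MLs(B)$, whence $\MLs(B)\subseteq B^{\delta}=\ML(B)$. Combining this with the free inclusion $\ML(B)\subseteq \MLs(B)$ gives $\ML(B)=\MLs(B)=B^{\delta}$, completing the argument. I expect the only genuine obstacle to be the finite generation of $B^{\delta}$ discussed above; the remainder is a direct chaining of the Slice Theorem and the Semi-Rigidity Theorem.
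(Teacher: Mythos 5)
Your proof is correct and follows essentially the same route as the paper: apply the Slice Theorem to write $B=(B^{\delta})^{[1]}$ and then invoke the Semi-Rigidity Theorem to identify $\ML(B)$ with $B^{\delta}$, the only cosmetic difference being that you obtain $\MLs(B)=B^{\delta}$ by a direct sandwich ($\MLs(B)\subseteq B^{\delta}=\ML(B)\subseteq \MLs(B)$) where the paper cites part $(2)$ of Lemma (\ref{ML*}). Your explicit verification that $B^{\delta}$ is affine (via the retract/quotient argument) is a worthwhile detail that the paper leaves implicit, and it is exactly what the Semi-Rigidity Theorem's hypotheses require.
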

\begin{proof}
     Let $A=B^{\delta}$. By Slice Theorem (\ref{slice theorem}), $B=A^{[1]}$ and by Semi-Rigidity Theorem (\ref{semi}) and (\ref{ML*}), we get $\ML (B)=A$ and $\MLs (B)=A$ and hence $\ML (B)=\MLs (B)$.
\end{proof}

\begin{lemma}\label{deg1}
    Let $B$ be $k$-domain with $\Tr _kB=1$. Then $\ML (B)=\MLs (B)$.
\end{lemma}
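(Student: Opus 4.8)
The plan is to prove the two inclusions separately. The inclusion $\ML (B)\subseteq \MLs (B)$ is immediate from $(6)$ of \ref{first principles} (equivalently, from $\Exps (B)\subseteq \Exp (B)$), so the real content is $\MLs (B)\subseteq \ML (B)$. I would reduce this to a single structural fact: \emph{every non-trivial exponential map on $B$ has a slice}. Granting this, $\Exps (B)$ coincides with the set of all non-trivial exponential maps on $B$. Since the trivial exponential map (the inclusion $B\hookrightarrow B^{[1]}$) has ring of invariants equal to all of $B$ and therefore contributes nothing to the intersection defining $\ML (B)$, the two intersections $\cap_{\delta \in \Exp (B)}B^{\delta}$ and $\cap_{\delta \in \Exps (B)}B^{\delta}$ agree, giving $\ML (B)=\MLs (B)$.

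The key step is to show that a non-trivial $\delta \in \Exp (B)$ always admits a slice. Let $x$ be a local slice of $\delta$, set $n=\deg_{\delta}(x)$ and $c=D_n(x)$; note that $c$ is the leading $t$-coefficient of $\delta_t(x)$ and is hence nonzero, and that $n>0$. By $(1)$ of \ref{first principles} we have $c\in B^{\delta}$, so it suffices to check that $c$ is a unit of $B$. For this I would observe that $B^{\delta}$ is a field: by $(4)$ of \ref{first principles} we have $\Tr _kB^{\delta}=\Tr _kB-1=0$, so $B^{\delta}$ is an integral domain (being a subring of the domain $B$) that is algebraic over $k$, and such a ring is automatically a field. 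Hence the nonzero element $c\in B^{\delta}$ is invertible in $B^{\delta}\subseteq B$, so $x$ is in fact a slice and $\delta \in \Exps (B)$.

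Finally I would note that this argument also covers the rigid case without extra work: if $B$ is rigid then $\Exp (B)$ contains only the trivial map, so $\ML (B)=B$, while $\Exps (B)=\phi$ forces $\MLs (B)=B$ by convention, and the two agree. I do not anticipate a serious obstacle; the only point requiring care is the elementary fact that an integral domain algebraic over a field is itself a field, which is precisely what promotes the local slice to a genuine slice and thereby collapses the distinction between $\ML $ and $\MLs $ in transcendence degree $1$.
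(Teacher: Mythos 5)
Your proposal is correct and follows essentially the same route as the paper: both arguments show that every non-trivial exponential map on $B$ admits a slice by noting that $\Tr _kB^{\delta}=0$ forces $B^{\delta}$ to be a field, so the leading coefficient $D_n(x)$ of a local slice is a unit. The rigid case is likewise disposed of by the definitional convention, exactly as in the paper.
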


\begin{proof}
    
If $B$ is rigid, then the result follows by definition.

Suppose $B$ is not rigid. We show that every exponential map on $B$ has a slice. Consider a non-trivial $\delta \in \Exp (B)$. Then by (\ref{first principles}),  $\Tr _kB^{\delta}=0$. Hence $B^{\delta}$ is algebraic over $k$ and is a domain,  and hence $B^{\delta}$ is a field. Consider a local slice $s$ of $\delta$ of $\delta$-degree $n$. By  $(1$) of (\ref{first principles}),  $D_n(x)\in B^{\delta}$ (which is a field) and hence $D_n(x)$ is a unit.  We can conclude that $s$ is a slice. Thus every non-trivial exponential map of $B$ has a slice,  and hence $\ML (B)=\MLs (B)$.
\end{proof}

The following result shows that a similar behavior as in (\ref{stab}) holds for $\MLs$.

\begin{proposition}\label{stab*}
    Let $B$ be a $k$-domain with $\Tr _kB=1$, where $k$ is an infinite field. Then $\MLs (B^{[n]})=\MLs (B)$ for every integer $n\geq 0$. Moreover,  we have that $\ML (B^{[n]})=\MLs (B^{[n]})$.
\end{proposition}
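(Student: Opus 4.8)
The plan is to sandwich $\MLs(B^{[n]})$ between $\ML(B^{[n]})$ and $\ML(B)$ and then collapse the resulting chain using the stability of the $\ML$-invariant already established in (\ref{stab}). The case $n=0$ is vacuous for the first assertion, while the second assertion for $n=0$ is exactly Lemma (\ref{deg1}); so I would fix $n\geq 1$ and work with $B^{[n]}$, which is itself a $k$-domain.

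First I would record the three inclusions that drive everything. Applying property $(6)$ of (\ref{first principles}) to the $k$-domain $B^{[n]}$ gives $\ML(B^{[n]})\subseteq \MLs(B^{[n]})$. Next, the argument proving part $(1)$ of (\ref{ML*}) — which uses only that shift maps exist on $B^{[n]}$ and that every non-trivial $\delta\in\Exp(B)$ extends to a sliced exponential map on $B^{[n]}$, and in particular never invokes that $B$ is affine — yields $\MLs(B^{[n]})\subseteq \ML(B)$. Finally, since $\Tr_kB=1$ and $k$ is infinite, Theorem (\ref{stab}) gives $\ML(B^{[n]})=\ML(B)$.

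Stringing these together produces $\ML(B)=\ML(B^{[n]})\subseteq \MLs(B^{[n]})\subseteq \ML(B)$, forcing every term to coincide. In particular $\ML(B^{[n]})=\MLs(B^{[n]})$, which is the ``moreover'' clause. It also gives $\MLs(B^{[n]})=\ML(B)$, and by Lemma (\ref{deg1}) we have $\ML(B)=\MLs(B)$; hence $\MLs(B^{[n]})=\MLs(B)$, completing the first assertion.

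There is no serious obstacle here: the content is entirely carried by the prior results (\ref{stab}), (\ref{ML*}), and (\ref{deg1}), and the proof is purely a matter of chaining them correctly. The only point demanding a moment's care is that (\ref{ML*}) is stated for affine $B$, whereas here $B$ is merely a $k$-domain; I would therefore note explicitly that the inclusion $\MLs(B^{[n]})\subseteq \ML(B)$ rests solely on the existence of shift maps and the extension of exponential maps to sliced maps on $B^{[n]}$, so it holds without the affineness hypothesis.
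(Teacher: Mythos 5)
Your proof is correct and follows essentially the same chain of inclusions as the paper, which combines (\ref{stab}), (\ref{deg1}), the containment $\ML\subseteq\MLs$, and a downward containment of $\MLs(B^{[n]})$ into the base ring's invariant. The only cosmetic difference is that the paper obtains the downward containment directly from $(8)$ of (\ref{first principles}), i.e.\ $\MLs(B^{[n]})\subseteq\MLs(B)$, which sidesteps the affineness worry you raise about (\ref{ML*}).
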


\begin{proof}
    By (\ref{stab}),  we have that $\ML (B^{[n]})=\ML (B)$. By (\ref{deg1}),  we also have that  $\MLs (B)=\ML (B)$. Thus,  $\MLs (B)=\ML (B^{[n]})$. Since $\ML (B^{[n]}) \subseteq \MLs (B^{[n]})$,  we get that $\MLs (B)\subseteq \MLs (B^{[n]})$. By $(8)$ of (\ref{first principles}),  we also have that $\MLs (B^{[n]})\subseteq \MLs (B)$ and hence $\MLs (B^{[n]})=\MLs (B)$. By (\ref{stab}) and (\ref{deg1}),  we have that $\ML (B^{[n]})=\MLs (B^{[n]})$.
\end{proof}

\begin{lemma}\label{equality}
    Let $B$ be an affine $k$-domain with $\Tr _kB=n\geq 2$. Suppose $\Tr _k\MLs (B)=n-1$. Then $\ML (B)=\MLs (B)$ and $B=C^{[1]}$ for some rigid subring $C$ of $B$. In particular,  $B$ is a semi-rigid domain.
\end{lemma}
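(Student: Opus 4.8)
The plan is to locate a single exponential map with a slice, use the Slice Theorem to split off one variable, and then identify $\MLs(B)$ with the base ring of that splitting. Since $\Tr_kB = n$ while $\Tr_k\MLs(B) = n-1$, the invariant $\MLs(B)$ is a \emph{proper} subring of $B$; by the convention in its definition this forces $\Exps(B) \neq \phi$. First I would fix some $\delta \in \Exps(B)$ and set $A = B^{\delta}$. By the Slice Theorem (\ref{slice theorem}) this gives $B = A^{[1]}$, and by $(4)$ of (\ref{first principles}) we have $\Tr_kA = n-1$.

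The key step is to prove $\MLs(B) = A$. Because $\delta \in \Exps(B)$, the intersection defining $\MLs(B)$ is contained in $B^{\delta} = A$, so $\MLs(B) \subseteq A$. By $(5)$ of (\ref{first principles}), $\MLs(B)$ is factorially closed, hence algebraically closed, in $B$, and therefore also in the subring $A$. But $\Tr_k\MLs(B) = n-1 = \Tr_kA$, so $A$ is algebraic over $\MLs(B)$; combined with algebraic closedness in $A$ this forces $A \subseteq \MLs(B)$, and hence $A = \MLs(B)$.

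Next I would show that $C := A$ is rigid. The $k$-algebra homomorphism $B = A[x] \to A$ sending the slice variable $x$ to $0$ and fixing $A$ is surjective, so $A$ is a quotient of the affine ring $B$ and is itself an affine $k$-domain; this legitimizes applying Lemma (\ref{ML*}) to $A$. By part $(2)$ of that lemma, $A$ is rigid if and only if $\MLs(A^{[1]}) = A$, i.e. $\MLs(B) = A$, which is precisely what was established in the previous paragraph. Hence $A$ is rigid and $B = A^{[1]} = C^{[1]}$ with $C$ rigid, giving the structural statement.

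Finally, the remaining assertions follow quickly. Applying the Semi-Rigidity Theorem (\ref{semi}) to the rigid ring $A$ yields $\ML(B) = \ML(A^{[1]}) = A = \MLs(B)$, so $\ML(B) = \MLs(B)$; and the equality $\ML(B) = A = B^{\delta}$ exhibits $B$ as semi-rigid by definition. I expect the main obstacle to be the identification $\MLs(B) = A$ — specifically verifying that algebraic closedness of $\MLs(B)$ in $B$ descends to algebraic closedness in $A$, and that equality of transcendence degrees upgrades ``$A$ is algebraic over $\MLs(B)$'' to genuine equality. Once this is secured, Lemma (\ref{ML*}) and the Semi-Rigidity Theorem carry out the rest with no characteristic-dependent difficulties.
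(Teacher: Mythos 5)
Your proposal is correct and follows essentially the same route as the paper: take $\delta\in\Exps(B)$, apply the Slice Theorem to get $B=A^{[1]}$, identify $\MLs(B)=A$ via factorial/algebraic closedness plus equality of transcendence degrees, then invoke part $(2)$ of Lemma (\ref{ML*}) for rigidity of $A$ and the Semi-Rigidity Theorem for $\ML(B)=A$. Your extra remarks (that $\MLs(B)\neq B$ forces $\Exps(B)\neq\phi$, and that $A$ is affine so Lemma (\ref{ML*}) applies) only make explicit steps the paper leaves implicit.
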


\begin{proof}
    Given $\Tr _k\MLs (B)=n-1$, so there exists a $\delta \in \Exps (B)$. By Slice Theorem (\ref{slice theorem}),  $B=C^{[1]}$  where $C=B^{\delta}$. Notice that $\Tr _kC=n-1$ and  $\MLs (B)\subseteq C$. $\MLs (B)$ and $C$ are factorially closed in $B$ and have the same transcendence degree so it follows that $\MLs (B)=C$. By $(2)$ of (\ref{ML*}), $C$ is rigid  and  by Semi-Rigidity Theorem (\ref{semi}), $\ML (B)=C$ and that $B$ is semi-rigid.
\end{proof}

\begin{lemma}\label{tr2}
    Let $B$ be an affine $k$-domain with $\Tr _kB=2$. Suppose that $\MLs (B)\neq B$. Then $\MLs (B)=\ML (B)$.

\end{lemma}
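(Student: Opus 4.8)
The plan is to bound the transcendence degree of $\MLs(B)$ and then split into two cases according to its value. Since $\MLs(B)\neq B$, there exists a non-trivial $\delta\in\Exps(B)$, and by the definition of the $\MLs$-invariant we have $\MLs(B)\subseteq B^{\delta}$. By $(4)$ of (\ref{first principles}) we have $\Tr_k B^{\delta}=\Tr_k B-1=1$, so $\Tr_k\MLs(B)\in\{0,1\}$. Recall also that $\ML(B)\subseteq\MLs(B)$ always holds by $(6)$ of (\ref{first principles}); hence it suffices to prove the reverse inclusion in each case.

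If $\Tr_k\MLs(B)=1$, then since $\Tr_k B=2$ this is precisely the hypothesis $\Tr_k\MLs(B)=n-1$ of (\ref{equality}) with $n=2$. I would simply invoke that lemma, which yields $\ML(B)=\MLs(B)$ directly, with no further work.

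The remaining case is $\Tr_k\MLs(B)=0$, which is where I expect the only mild subtlety to lie, since (\ref{equality}) does not apply. Here every element of $\MLs(B)$ is algebraic over $k$, so $\MLs(B)$ is contained in the relative algebraic closure of $k$ in $B$. I would then observe that for any non-trivial $\epsilon\in\Exp(B)$, the ring $B^{\epsilon}$ is factorially closed, hence algebraically closed, in $B$ by $(2)$ of (\ref{first principles}) and contains $k$; consequently it contains the entire relative algebraic closure of $k$ in $B$. Intersecting over all $\epsilon\in\Exp(B)$ (the trivial map contributing $B$ itself) shows that this algebraic closure lies in $\ML(B)$. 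Therefore $\MLs(B)\subseteq\ML(B)$, and combined with the reverse inclusion we conclude $\MLs(B)=\ML(B)$.

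The genuine content is carried entirely by (\ref{equality}) in the first case; the main (and essentially the only) point requiring care is the degenerate case $\Tr_k\MLs(B)=0$, which rests on the elementary fact that the algebraic closure of $k$ inside $B$ is fixed by every exponential map and therefore sits inside $\ML(B)$.
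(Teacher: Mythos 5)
Your proposal is correct and follows essentially the same route as the paper: the same case split on $\Tr_k\MLs(B)\in\{0,1\}$, with the case $\Tr_k\MLs(B)=1$ handled by (\ref{equality}) exactly as in the paper. In the degenerate case the paper compares $\ML(B)$ and $\MLs(B)$ directly (both factorially closed of equal transcendence degree, one contained in the other), whereas you route through the relative algebraic closure of $k$ in $B$; this is just a repackaging of the same underlying fact that rings of invariants are algebraically closed in $B$.
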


\begin{proof}
        Since $\MLs (B)\neq B$, hence $\Tr _k\MLs (B)\leq 1$. If $\Tr _k\MLs (B)=1$, then by (\ref{equality}),  the result follows.  Suppose $\Tr _k\MLs (B)=0$, then since $\ML (B)\subseteq \MLs (B)$ we get that $\Tr _k\ML (B)=0$. $\ML (B)$ and $\MLs (B)$ are factorially closed in $B$, and of the same transcendence degree so it follows that $\ML (B)=\MLs (B)$. So in all cases, we get that $\ML(B)=\MLs (B)$. 
\end{proof}

\begin{corollary}
    Let $k$ be an algebraically closed field. Suppose $B$ is an affine $\Ufd$ over $k$ such that $\Tr _kB=2$. Then $\ML (B)=\MLs (B)$.
\end{corollary}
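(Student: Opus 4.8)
The plan is to reduce to the two cases distinguished by the transcendence degree of $\MLs(B)$, exactly as in the proof of Lemma \ref{tr2}, and then combine the hypotheses with the characterization results already established. The key structural fact is that over an algebraically closed field, a $\Ufd$ of transcendence degree $2$ is never rigid in a way that traps $\MLs(B)$ at the top; more precisely, I would first show that $\MLs(B) \neq B$, which instantly lets me invoke Lemma \ref{tr2} to conclude $\ML(B) = \MLs(B)$.

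First I would dispose of the case where $B$ is rigid: if $\ML(B) = B$, then there are no non-trivial exponential maps at all, so $\Exps(B) = \phi$ and by definition $\MLs(B) = B = \ML(B)$, giving the equality directly. So I may assume $B$ is not rigid, i.e. there exists a non-trivial $\delta \in \Exp(B)$. The crucial step is then to produce an exponential map \emph{with a slice}, so that $\MLs(B) \neq B$. Here I would apply Theorem \ref{ufd slice}: since $B$ is a $\Ufd$ over an algebraically closed $k$ with $\Tr_k B = 2$ and carries a non-trivial $\delta$, we get $B \cong (B^\delta)^{[1]}$. This isomorphism exhibits $B$ as a polynomial ring in one variable over $A := B^\delta$, and the shift exponential map on $(B^\delta)^{[1]}$ has the coordinate as a genuine slice; transporting it back gives a non-trivial exponential map on $B$ lying in $\Exps(B)$. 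Consequently $\Exps(B) \neq \phi$ and $\MLs(B) \subseteq B^\epsilon \subsetneq B$ for this slice-bearing $\epsilon$, so $\MLs(B) \neq B$.

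Once $\MLs(B) \neq B$ is secured, the result is immediate from Lemma \ref{tr2}: that lemma requires only that $B$ be an affine $k$-domain with $\Tr_k B = 2$ and $\MLs(B) \neq B$, and concludes $\ML(B) = \MLs(B)$. Since a $\Ufd$ is in particular an affine $k$-domain of the stated transcendence degree, the hypotheses of Lemma \ref{tr2} are met, and the two invariants coincide.

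The step I expect to be the main obstacle is the middle one: guaranteeing that the non-trivial exponential map can be upgraded to one possessing an actual slice. A priori, as the remark following Theorem \ref{ufd slice} emphasizes, an irreducible exponential map on such a $B$ need \emph{not} have a slice in positive characteristic (the example $\delta(y) = y + t + x^2 t^p$ illustrates this). The point is therefore to use Theorem \ref{ufd slice} not to claim $\delta$ itself has a slice, but to obtain the isomorphism $B \cong (B^\delta)^{[1]}$ and then manufacture a fresh slice-bearing exponential map via the shift on the polynomial structure. Handling this correctly — distinguishing "$\delta$ has a slice" from "$B$ admits \emph{some} map with a slice" — is the delicate part; the rest is routine bookkeeping via the factorial-closure and transcendence-degree comparison already packaged in Lemma \ref{tr2}.
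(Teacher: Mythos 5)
Your proposal is correct and follows essentially the same route as the paper: handle the rigid case by definition, use Theorem (\ref{ufd slice}) in the non-rigid case to obtain the polynomial structure $B\cong (B^{\delta})^{[1]}$ and hence a slice-bearing exponential map showing $\MLs(B)\neq B$, then conclude via Lemma (\ref{tr2}). Your explicit construction of the slice via the shift map is exactly what the paper means by the existence of $\epsilon\in\Exps(B)$ with $B^{\epsilon}=B^{\delta}$.
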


\begin{proof}
    If $B$ is rigid, then the result holds by definition. Suppose $B$ is not rigid,  then by (\ref{ufd slice}),  $B=(B^{\delta})^{[1]}$ for some non-trivial exponential map $\delta \in \Exp (B)$ and there exists an  $\epsilon \in \Exps (B)$ such that $B^{\epsilon}=B^{\delta}$. Thus $\Exps (B)\neq \phi$ and hence $\MLs (B)\neq B$  and by (\ref{tr2}),  the result follows. 
\end{proof}

\begin{proposition}\label{equality2}
    Let $B$ be a $\Ufd$ over a field $k$ with $\Tr _kB=2$, where $k$ is an infinite field. Suppose $B$ is geometrically factorial and $B$ is not rigid. Then $\MLs (B^{[n]})=\MLs (B)$ for every integer $n\geq 0$. Moreover, we have that $\ML (B^{[n]})=\MLs (B^{[n]})$.
\end{proposition}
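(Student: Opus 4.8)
The plan is to reduce everything to transcendence degree one and then quote the stability result (\ref{stab*}). The key observation is that the hypothesis $\MLs(B)\neq B$ forces $\Exps(B)\neq \phi$, so there is a non-trivial $\delta \in \Exps(B)$, i.e. one admitting a slice. By the Slice Theorem (\ref{slice theorem}) this already gives $B=A^{[1]}$ with $A=B^{\delta}$, and by $(4)$ of (\ref{first principles}) we have $\Tr_k A=1$; moreover $A$, being a subring of the domain $B$, is itself a $k$-domain. This is the entire structural input I need.

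With this splitting in hand I would simply write $B^{[n]}=A^{[n+1]}$ for every $n\geq 0$ and apply (\ref{stab*}) to $A$: since $\Tr_k A=1$ and $k$ is infinite, it yields $\MLs(A^{[m]})=\MLs(A)$ and $\ML(A^{[m]})=\MLs(A^{[m]})$ for all $m\geq 0$. Taking $m=1$ gives $\MLs(B)=\MLs(A^{[1]})=\MLs(A)$, whence for every $n$ one gets $\MLs(B^{[n]})=\MLs(A^{[n+1]})=\MLs(A)=\MLs(B)$, which is the first assertion. For the ``moreover'' clause I would combine the two outputs of (\ref{stab*}): $\ML(B^{[n]})=\ML(A^{[n+1]})=\MLs(A^{[n+1]})=\MLs(B^{[n]})$. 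Note this also covers $n=0$, recovering $\ML(B)=\MLs(B)$ (consistent with (\ref{tr2})).

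There is no serious obstacle; the whole content is the reduction in the first paragraph, after which the degree-one stability (\ref{stab*}) does all the work. I would remark that this route uses only the slice produced by $\MLs(B)\neq B$ together with the Slice Theorem, and so appears not to require the $\Ufd$ or geometric-factoriality hypotheses for this particular argument—they are available but inert here, since the Slice Theorem produces the splitting $B=A^{[1]}$ directly from the existence of a slice. The one essential use of $k$ being infinite is the appeal to (\ref{stab*}), which inherits that assumption from (\ref{stab}); over a finite field one would have to replace the automorphism-restriction step underlying (\ref{stab}).
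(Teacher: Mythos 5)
Your proof is correct, but it takes a genuinely different route from the paper. The paper's proof never touches the Slice Theorem at this point: it quotes (\ref{stab2}) to get $\ML(B^{[n]})=\ML(B)$ and (\ref{tr2}) to get $\ML(B)=\MLs(B)$, then sandwiches $\MLs(B^{[n]})$ between $\ML(B^{[n]})$ and $\MLs(B)$ using $(8)$ of (\ref{first principles}). You instead use the slice guaranteed by $\MLs(B)\neq B$ to split $B=A^{[1]}$ with $\Tr_kA=1$ via the Slice Theorem (\ref{slice theorem}), rewrite $B^{[n]}=A^{[n+1]}$, and push everything through the degree-one stability result (\ref{stab*}). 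Your observation about hypotheses is accurate and worth noting: the $\Ufd$ and geometric-factoriality assumptions enter the paper's argument only through (\ref{stab2}), whose role is to manufacture the splitting $B=A^{[1]}$ from an arbitrary non-trivial exponential map via (\ref{gufd slice}); since your splitting comes directly from the slice, those hypotheses (and the affineness that (\ref{tr2}) formally requires) are indeed inert in your argument, so your proof is slightly more general. What the paper's route buys in exchange is modularity — it exhibits the proposition as an immediate corollary of the two previously established stability and equality statements — whereas your route in effect reproves the relevant case of (\ref{tr2}) as the $n=0$ instance. Both arguments depend on $k$ being infinite only through (\ref{stab})/(\ref{stab*}), as you say.
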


\begin{proof}
        By (\ref{stab2}), we have that $\ML (B^{[n]})=\ML (B)$. By (\ref{gufd slice}), we have $\MLs (B)\neq B$. By (\ref{tr2}), we also have that  $\MLs (B)=\ML (B)$. Thus,  $\MLs (B)=\ML (B^{[n]})$. Since $\ML (B^{[n]}) \subseteq \MLs (B^{[n]})$, we get that $\MLs (B)\subseteq \MLs (B^{[n]})$. By $(8)$ of (\ref{first principles}), we also have that $\MLs (B^{[n]})\subseteq \MLs (B)$ and hence $\MLs (B^{[n]})=\MLs (B)$. By (\ref{stab2}) and (\ref{tr2}),  we have that $\ML (B^{[n]})=\MLs (B^{[n]})$.
\end{proof}

\begin{corollary}\label{ack}
    Let $B$ be a $\Ufd$ over an algebraically closed field $k$ with $\Tr _kB=2$ and $B$ is not rigid. Then  $\MLs (B^{[n]})=\MLs (B)$ for every integer $n\geq 0$. Moreover, we have that $\ML (B^{[n]})=\MLs (B^{[n]})$.    \end{corollary}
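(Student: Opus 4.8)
The plan is to deduce this corollary directly from Proposition \ref{equality2} by verifying that every hypothesis of that proposition is automatically satisfied under the stronger assumption that $k$ is algebraically closed. Proposition \ref{equality2} requires that $k$ be an infinite field, that $B$ be a $\Ufd$ over $k$ with $\Tr_kB=2$, that $B$ be geometrically factorial, and that $\MLs(B)\neq B$. Three of these are either assumed outright in the corollary or follow from standard facts; the only point requiring a remark is geometric factoriality.

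First I would observe that an algebraically closed field is necessarily infinite: a finite field $\{a_1,\dots,a_m\}$ cannot be algebraically closed since $\prod_{i=1}^m(x-a_i)+1$ has no root in it. Thus the infiniteness hypothesis of Proposition \ref{equality2} is met. The conditions that $B$ is a $\Ufd$ over $k$ with $\Tr_kB=2$ and that $\MLs(B)\neq B$ are exactly the standing hypotheses of the corollary, so nothing further is needed there.

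The remaining step is to check that $B$ is geometrically factorial, i.e.\ that $B\tens{k}L$ is a $\Ufd$ for every algebraic extension $L$ of $k$. Here I would use that $k$ is algebraically closed: the only algebraic extension of $k$ is $k$ itself (up to $k$-isomorphism), so $B\tens{k}L\cong B$, which is a $\Ufd$ by hypothesis. Hence $B$ is geometrically factorial. With all four hypotheses verified, Proposition \ref{equality2} applies verbatim and yields both conclusions, namely $\MLs(B^{[n]})=\MLs(B)$ for all $n\geq 0$ and $\ML(B^{[n]})=\MLs(B^{[n]})$.

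I do not expect any genuine obstacle here, since the corollary is simply the specialization of Proposition \ref{equality2} from the class of geometrically factorial $\Ufd$s over an infinite field to the class of $\Ufd$s over an algebraically closed field. The only thing to be careful about is articulating why algebraic closedness forces both infiniteness and geometric factoriality, after which the result is immediate; accordingly the write-up should be a short paragraph invoking (\ref{equality2}) rather than a fresh argument.
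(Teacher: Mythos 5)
Your proposal is correct and is exactly the intended argument: the paper states this corollary without proof precisely because it is the specialization of Proposition (\ref{equality2}) obtained by noting that an algebraically closed field is infinite and that a $\Ufd$ over an algebraically closed field is automatically geometrically factorial (its only algebraic extension being $k$ itself). Both of your verifications are sound, so nothing further is needed.
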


\begin{lemma}\label{coincide}
    Let $B$ be an affine $\Ufd$ over a field $k$ with $\Tr _kB=3$, where $k$ is an infinite field. Suppose $B$ is geometrically factorial. If $\MLs (B)\neq B$,  then $\ML (B)=\MLs (B)$.
\end{lemma}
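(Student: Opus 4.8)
The plan is to use the single exponential map with a slice to strip off one variable, reducing the transcendence degree $3$ problem to the transcendence degree $2$ stability result (\ref{equality2}).

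First I would use $\MLs(B) \neq B$ to produce a non-trivial $\delta \in \Exps(B)$ and apply the Slice Theorem (\ref{slice theorem}) to write $B = A^{[1]}$ with $A = B^{\delta}$ and $\Tr_k A = 2$. Exactly as in the proof of (\ref{3 space}), $A$ is affine, $A$ is a $\Ufd$ by $(10)$ of (\ref{first principles}) since it is factorially closed in $B$, and because $B = A^{[1]}$ is geometrically factorial so is $A$. Everything then reduces to comparing $\ML(A^{[1]})$ with $\MLs(A^{[1]})$.

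Next I would split into two cases according to whether $A$ is rigid. If $A$ is rigid, the Semi-Rigidity Theorem (\ref{semi}) gives $\ML(B) = \ML(A^{[1]}) = A$, while $(2)$ of (\ref{ML*}) gives $\MLs(B) = \MLs(A^{[1]}) = A$; hence $\ML(B) = \MLs(B) = A$. If $A$ is not rigid, then since $A$ is a geometrically factorial $\Ufd$ with $\Tr_k A = 2$, (\ref{gufd slice}) lets me write $A = C^{[1]}$ with $C = A^{\epsilon}$; the shift exponential map on $C^{[1]}$ has a slice, so $\MLs(A) \neq A$. Now $A$ satisfies every hypothesis of (\ref{equality2}) (it is a $\Ufd$ with $\Tr_k A = 2$ over the infinite field $k$, geometrically factorial, and $\MLs(A) \neq A$), and applying that proposition with $n = 1$ gives $\ML(B) = \ML(A^{[1]}) = \MLs(A^{[1]}) = \MLs(B)$.

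The one point that makes the argument go through is the observation that non-rigidity of $A$ automatically forces $\MLs(A) \neq A$, via the splitting $A = C^{[1]}$ and the slice carried by the shift map; this is precisely what allows me to invoke (\ref{equality2}) instead of getting stuck, and the rigid case is then handled separately and cheaply. I expect the only real obstacle to be bookkeeping: justifying that $A = B^{\delta}$ is affine and that geometric factoriality descends from $B$ to $A$, together with checking that each cited result's hypotheses (infinite $k$, transcendence degrees, and the $\Ufd$ property) are met. All of these mirror the earlier proofs in the paper and introduce no new ideas.
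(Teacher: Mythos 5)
Your proposal is correct and follows essentially the same route as the paper: decompose $B=A^{[1]}$ via the Slice Theorem, use (\ref{gufd slice}) to get $\MLs(A)\neq A$ when $A$ is not rigid, and conclude by (\ref{equality2}). The only cosmetic difference is that you split cases on the rigidity of $A$ and handle the rigid case directly through the Semi-Rigidity Theorem and $(2)$ of (\ref{ML*}), whereas the paper splits on $\Tr_k\MLs(B)$ and routes the degenerate case through (\ref{equality}); the content is the same.
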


\begin{proof}
    Since $\MLs (B)\neq B$,  we have that $\Tr _k\MLs (B)\leq 2$. 

    \emph{Case 1}: $\Tr _k\MLs (B)=2$. The result follows from (\ref{equality}). 

    \emph{Case 2}: $\Tr _k\MLs (B)\leq 1$. Let $\delta \in \Exps (B)$. By Slice Theorem (\ref{slice theorem}), we have $B=A^{[1]}$ where $A=B^{\delta}$ and $\Tr _kA=2$. Since $\Tr _k\MLs (B)\leq 1$, by $(2)$ of (\ref{ML*}), $A$ is not rigid. $B=A^{[1]}$ and $B$ is geometrically factorial implies that $A$ is  geometrically factorial with $\Tr _kA=2$. By (\ref{gufd slice}), $\MLs (A)\neq A$ and since $B=A^{[1]}$ it follows by (\ref{equality2}) that $\ML (B)=\MLs (B)$.
\end{proof}

\begin{corollary}\label{tr3 equality}
       Let $k$ be an algebraically closed field and $B$ be an affine $\Ufd$ over $k$ with $\Tr _kB=3$. If $\MLs (B)\neq B$,  then $\ML (B)=\MLs (B)$. 
\end{corollary}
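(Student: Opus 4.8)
The plan is to obtain this corollary as a direct specialization of the lemma that immediately precedes it, which establishes the same equality $\ML(B) = \MLs(B)$ under the additional hypotheses that $k$ is infinite and that $B$ is geometrically factorial. The two tasks are therefore to check that these extra hypotheses are automatically satisfied once we assume $k$ is algebraically closed.

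First I would observe that an algebraically closed field is infinite, so the requirement that $k$ be infinite is met for free. Second, and this is the only point requiring a word of justification, I would verify that $B$ is geometrically factorial. By definition this means $B \tens{k} L$ is a $\Ufd$ for every algebraic extension $L$ of $k$. Since $k$ is algebraically closed it admits no proper algebraic extension, so the only possibility is $L = k$, giving $B \tens{k} k \cong B$, which is a $\Ufd$ by hypothesis. Hence $B$ is geometrically factorial, and this is the essential (though elementary) reduction underlying the corollary.

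With both observations in place, $B$ satisfies all the hypotheses of the preceding lemma: it is an affine $\Ufd$ over the infinite field $k$ with $\Tr_k B = 3$, it is geometrically factorial, and $\MLs(B) \neq B$. Applying that lemma yields $\ML(B) = \MLs(B)$, which is exactly the desired conclusion. I anticipate no genuine obstacle here, since the whole content of the corollary is the recognition that ``$\Ufd$ over an algebraically closed field'' forces geometric factoriality, a fact that rests only on the absence of nontrivial algebraic extensions of $k$.
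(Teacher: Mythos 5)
Your proposal is correct and matches the paper's intent exactly: the corollary is stated without proof precisely because it specializes the preceding lemma, with the only content being that an algebraically closed field is infinite and that a $\Ufd$ over such a field is automatically geometrically factorial (since $k$ admits no proper algebraic extensions). No gaps.
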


\begin{remark}\label{Gaif}
   In \cite[Theorem $1$]{GS}, it is shown that when $k$ is an algebraically closed field of zero characteristic and if $\MLs (B)\neq B$,  then $\ML (B)=\MLs (B)$. We note that $(2)$ of (\ref{ML*}) and (\ref{stab*}) are identical to the properties of $\ML$-invariant, which leads to the following question. 
\end{remark}
\begin{question}\label{quest}
 Let $B$ be an affine $k$-domain where $k$ is an algebraically closed field of arbitrary characteristic. Suppose $\MLs (B)\neq B$. Is $\MLs (B)=\ML (B)?$
\end{question}
We note that (\ref{stab*}), (\ref{tr2}),  (\ref{equality2}) and (\ref{tr3 equality}) answer this question positively in some cases.\\

\noindent
{\bf Acknowledgement:} The author thanks Manoj K. Keshari for reviewing earlier drafts and suggesting improvements. The author thanks the referee for suggesting that the hypothesis in proposition (\ref{equality2}) and corollary (\ref{ack}) can be weakened from $\MLs (B)\neq B$ to $B$ is not rigid. The author is supported by the Prime Minister's Research Fellowship (PMRF), Government of India (ID: 1301165).  
\bibliographystyle{abbrv}
\bibliography{refs} 
\end{document}